\documentclass[12pt]{article}
\usepackage{graphicx} 

\usepackage{authblk}
\usepackage[margin=2.5cm]{geometry}
\usepackage{t1enc}
\usepackage[utf8]{inputenc}
\usepackage{amsmath,amsthm,amssymb}
\usepackage{graphicx}
\usepackage{enumerate}
\usepackage{hyperref}
\usepackage{bm}
\usepackage{comment}
\usepackage{amsfonts}
\usepackage{graphicx,caption}
\usepackage{bm}
\usepackage{amsmath, amsthm, amssymb}
\usepackage{graphicx}
\usepackage{hyperref}
\usepackage{relsize}
\usepackage{blkarray}
\usepackage{tikz}
\usetikzlibrary{decorations.pathreplacing}
\usepackage{tabstackengine}

\stackMath

\usepackage{subfigure}

\usepackage[table]{xcolor}

\usepackage{algpseudocode}

\usepackage{bbm}

\theoremstyle{plain}
\usepackage{amsthm}
\makeatletter
\newcommand{\newreptheorem}[2]{\newtheorem*{rep@#1}{\rep@title}\newenvironment{rep#1}[1]{\def\rep@title{#2 \ref*{##1}}\begin{rep@#1}}{\end{rep@#1}}}
\makeatother

\newtheorem{theorem}{Theorem}
\newtheorem*{theorem-non}{Theorem}
\newtheorem*{non-lemma}{Lemma}
\newtheorem{lemma}[theorem]{Lemma}
\newreptheorem{lemma}{Lemma}

\theoremstyle{definition}

\DeclareMathOperator{\supp}{supp}

\DeclareMathOperator{\suppr}{supp_r}

\title{Cocycles of determinantal hypertrees with small support}
\author{Andr\'as M\'esz\'aros}
\date{}
\affil{HUN-REN Alfr\'ed R\'enyi Institute of Mathematics}

\begin{document}

\maketitle

\begin{abstract}
 Let $\mathcal{T}_n$ be a random $2$-dimensional determinantal hypertree on $n$ vertices. Given any prime $p$, we answer the following question: If a cocycle in $Z^1(\mathcal{T}_n,\mathbb{F}_p)$ has small support, what does the support typically look like? More precisely, we characterize all the finite connected graphs $G$ for which there is a constant $c_G>0$ with the following property: For all large enough $n$, with probability at least $c_G$, we have a cocycle $f\in Z^1(\mathcal{T}_n,\mathbb{F}_p)$ such that after removing all the isolated vertices, the support of $f$ is isomorphic to~$G$. We prove that for $p>2$, we do not have any such graph. For $p=2$, a connected graph has the property above if and only if it has a unique cycle such that this unique cycle has odd length, moreover, if the unique cycle is a triangle, then we also need to require that all the vertices of the triangle have degree at least $3$. 
\end{abstract}

\section{Introduction}

Determinantal hypertrees are natural higher dimensional generalizations of a uniform random spanning tree of a complete graph. They can be defined in any dimension, but in this paper, we restrict our attention to the $2$-dimensional case. A $2$-dimensional simplicial complex $\mathcal{S}$ on the vertex set $[n]=\{1,2,\dots,n\}$ is called a ($2$-dimensional) hypertree, if
\begin{enumerate}[\hspace{30pt}(a)]
 \item\label{pra} $\mathcal{S}$ has complete $1$-skeleton;
 \item\label{prb} The number of triangular faces of $\mathcal{S}$ is ${{n-1}\choose{2}}$;
 \item\label{prc} The homology group $H_{1}(\mathcal{S},\mathbb{Z})$ is finite.
\end{enumerate}

In one dimension, a spanning tree must be connected, property \eqref{prc} above is the two dimensional analogue of this requirement. Note that any complex $\mathcal{S}$ satisfying \eqref{pra} and \eqref{prc} must have at least ${n-1}\choose{2}$ triangular faces. For a graph $G$, if the reduced homology group $\tilde{H}_0(G,\mathbb{Z})$ is finite, then it is trivial. This statement fails in two dimensions since for a hypertree $\mathcal{S}$, the order of $H_1(\mathcal{S},\mathbb{Z})$ can range from $1$ to $\exp(\Theta(n^2))$, see \cite{kalai1983enumeration}. Thus, while the homology of spanning trees is uninteresting, the homology of $2$-dimensional hypertrees is a very rich subject to study.

Kalai's generalization of Cayley's formula \cite{kalai1983enumeration} states that
\[\sum |H_{1}(\mathcal{S},\mathbb{Z})|^2=n^{{n-2}\choose {2}},\]
where the summation is over all the hypertrees $\mathcal{S}$ on the vertex set $[n]$. See also~\cite{duval2009simplicial} for generalizations. This formula suggests that the natural probability measure on the set of hypertrees is the one where the probability assigned to a hypertree $\mathcal{S}$ is \begin{equation}\label{measuredef}
 \frac{|H_{1}(\mathcal{S},\mathbb{Z})|^2}{n^{{n-2}\choose {2}}}.
\end{equation}
It turns out that this measure is a determinantal probability measure \cite{lyons2003determinantal,hough2006determinantal}. Thus, a random hypertree $\mathcal{T}_n$ distributed according to \eqref{measuredef} is called a determinantal hypertree. General random determinantal complexes were investigated by Lyons \cite{lyons2009random}. While uniform random spanning trees are well-studied \cite{ald1,ald2,ald3,grimmett1980random,szekeres2006distribution,lyons2017probability}, a theory of determinantal hypertrees started to emerge only recently. The author determined the local weak limit of determinantal hypertrees~\cite{meszaros2022local}, combining this with estimates on the spectrum of the Laplacian matrix of $\mathcal{T}_n$, the author proved that $n^{-2}\log |H_1(\mathcal{T}_n,\mathbb{Z})|$ converges in probability to a constant~\cite{meszaros2025homology}. Vander Werf~\cite{werf2022determinantal} and the author~\cite{meszaros2023coboundary} investigated various expansion properties of the union of independent copies of $\mathcal{T}_n$. Linial and Peled~\cite{linial2019enumeration} provided estimates on the number of hypertrees. All the above mentioned results extend to dimensions larger than $2$. In the $2$-dimensional case, Kahle and Newman~\cite{kahle2022topology} proved that with high probability the fundamental group $\pi_1(\mathcal{T}_n)$ is hyperbolic and has
cohomological dimension 2. Moreover, the author~\cite{meszaros2024bounds,meszaros2025using} showed that for all prime~$p$, $n^{-2}\dim H_1(\mathcal{T}_n,\mathbb{F}_p)$ converges to $0$ in probability.
 More generally, one can also define spanning hypertrees/hyperforests of a simplicial complex, see for example~\cite[Section 1.2.3]{nachmias2025local}. Note that spanning hypertrees are also called spanning acycles sometimes. The lifetime formula of Hiraoka and Shirai connects minimum weight spanning hypertrees to persistent homology~\cite{hiraoka2017minimum}. Extending the results of the author~\cite{meszaros2022local}, Nachmias and Peled determined the local weak limit of determinantal hyperforests in regular polytopal complexes~\cite{nachmias2025local}.

 Kahle and Newman conjectured that the $p$-part of $H_1(\mathcal{T}_n,\mathbb{Z})$ has Cohen--Lenstra limiting distribution~\cite{kahle2022topology}. In the case $p=2$, the author~\cite{meszaros20242} disproved this conjecture by proving that for any positive integer $h$, provided that $n$ is large enough, we have
 \begin{equation}\label{regibound}
 \mathbb{P}(\dim H_1(\mathcal{T}_n,\mathbb{F}_2)\ge h)\ge \frac{e^{-200h}}{(100h)^{5h}},
 \end{equation}
 showing that $\dim H_1(\mathcal{T}_n,\mathbb{F}_2)$ has a heavier tail than the Cohen--Lenstra limiting distribution would imply. The author established the bound~\eqref{regibound} by proving that with probability $\frac{e^{-200h}}{(100h)^{5h}}$, we have $h$ cocycles in $Z^1(\mathcal{T}_n,\mathbb{F}_2)$ such that each of these cocycles is supported on a pentagon, and these pentagons are pairwise vertex disjoint. Thus, the heavier than Cohen--Lenstra tail of $\dim H_1(\mathcal{T}_n,\mathbb{F}_2)$ can be explained by the presence of cocycles with small support. This seems to be a general phenomenon, see~\cite{meszaros2024phase,meszaros2025rank,kang2026random,lee2025distribution} for other occurrences. Note that the argument above is specific to the case $p=2$, the conjecture of Kahle and Newman is still open for $p>2$. See~\cite{meszaros2023cohen,lee2025distribution} for results inspired by the conjecture above. The Cohen--Lenstra distribution is also conjectured to appear for other random simplicial complexes~\cite{kahle2020cohen}. See also the survey of Wood~\cite{wood2022probability} for instances where the Cohen--Lenstra limiting distribution was established. 

As cocycles with small support played a key role in the discussion above, it is natural to ask for a characterization of all finite graphs that can occur as the support of a cocycle in~$Z^1(\mathcal{T}_n,\mathbb{F}_p)$ with asymptotically positive probability. Theorem~\ref{thm1} and Theorem~\ref{thm2} below provide a full answer to this question, and also show that the case of $p=2$ is drastically different from the case of odd $p$.

Before we state our main theorems, we need a few definitions. We say that a graph $G$ is a nice unicyclic graph (NUG) if
\begin{enumerate}[(a)]
 \item $G$ is connected and contains a unique cycle,
 \item the unique cycle has odd length,
 \item moreover, either
 \begin{itemize}
 \item the unique cycle has length at least $5$, or
 \item the unique cycle is a triangle, and each vertex of this triangle has degree at least $3$.
 \end{itemize}
\end{enumerate}

The support of an $f\in Z^1(\mathcal{T}_n,\mathbb{F}_p)$ is defined as
\[\supp(f)=\left\{\{u,v\}\in {{[n]}\choose{2}}\,:\,f(u,v)\neq 0\right\}, \]
which can be considered as a graph. The graph $\supp_r(f)$ is obtained from $\supp(f)$ by removing all the isolated vertices.

 \begin{theorem}\label{thm1}
There is a $c>0$ such that the following holds: 
\begin{enumerate}[(a)]
 \item\label{thm1parta} As $n\to \infty$, with probability tending to $1$, for all odd primes $p$ and $0\neq f\in Z^1(\mathcal{T}_n,\mathbb{F}_p)$, all the connected components of $\suppr(f)$ have at least $c\log (n)$ vertices. 
 \item\label{thm1partb} As $n\to \infty$, with probability tending to $1$, for all $0\neq f\in Z^1(\mathcal{T}_n,\mathbb{F}_2)$, if a connected component of $\suppr(f)$ has less than $c\log (n)$ vertices, then this connected component must be a NUG. 
\end{enumerate}
\end{theorem}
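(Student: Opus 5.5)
A first-moment argument over small candidate supports looks like the natural route. The key observation is that a cocycle $f\in Z^1(\mathcal{T}_n,\mathbb{F}_p)$ restricted to a connected component $H$ of $\suppr(f)$ gives another cocycle. Let $f_H$ agree with $f$ on the edges of $H$ and vanish elsewhere. For any triangle $\{u,v,w\}$ of $\mathcal{T}_n$, the coboundary condition $f(u,v)+f(v,w)+f(w,u)=0$ cannot have exactly one nonzero term. So a triangle of $\mathcal{T}_n$ meets $\supp(f)$ in $0$, $2$ or $3$ edges. If it meets $\supp(f)$ at all, then all of its support edges share vertices and lie in the same component. Hence $f_H$ is also a cocycle, and it suffices to show the following: with high probability there is no nonzero cocycle whose support is connected, has $k<c\log n$ vertices, and is not a NUG (for $p=2$), respectively is arbitrary (for $p$ odd).

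Fix a connected graph $H$ on a vertex set $V\subset[n]$ with $|V|=k$, and fix a labeling $g:E(H)\to\mathbb{F}_p^*$. The event that $g$ extends by zero to a cocycle imposes conditions on $\mathcal{T}_n$ of two kinds.
\begin{enumerate}[(i)]
 \item No triangle of $\mathcal{T}_n$ contains exactly one edge of $H$. There are about $|E(H)|\,n$ such triples with the third vertex outside $V$.
 \item For each triangle with two or three edges in $H$, the labels sum to zero.
\end{enumerate}
The determinantal structure will handle (i). Each edge lies in about $n$ triangles of $\mathcal{T}_n$ on average, since the density of triangles is $\binom{n-1}{2}/\binom n3\approx 3/n$. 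I would use negative association of determinantal measures, or a direct computation with the projection kernel onto the cocycle-orthogonal space, to bound the probability that no triangle meets $H$ in exactly one edge. For the absence of such triangles through the edge $uv$ alone, one can note that in the kernel computation a triangle $\{u,v,w\}$ with $w\notin V$ is present with probability about $3/n$, nearly independently over $w$. This gives an edge-by-edge factor of order $e^{-3+o(1)}$. The bound on the whole event should then be of the form $\exp(-\alpha\,|E(H)|)$, up to corrections coming from triangles inside $V$.

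The counting then reads as follows. There are at most $n^k k^{2k}p^{k^2}$ pairs $(H,g)$ with $|V|=k$, so what is needed is a bound of the form
\[
\mathbb{P}(g \text{ is a cocycle})\le n^{-(1+\delta)k}\,e^{O(k^2)},
\]
or at least $n^{-k-\varepsilon}$ when summed. For the relevant structure, the cocycle condition on the triangles of $\mathcal{T}_n$ inside $V$ should be thought of linearly: $f_H$ must lie in the $\mathbb{F}_p$-kernel of the transposed boundary map restricted to $V$. If $H$ has a vertex $v$ of degree $1$, then $v$'s edge must not lie in any triangle containing another support edge. That edge is then forced into no triangle other than ones with one support edge, and with the $w$ outside this has probability $e^{-\Theta(n)}$-type smallness. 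More precisely, the expected number of triangles containing $uv$ is about $3$, so the penalty is only constant. The gain must therefore come from requiring about $n$ triangles all to avoid a condition. Indeed, every triangle $\{u,v,w\}$ of $\mathcal{T}_n$ must have $w\in V$, and the linear constraints in the boundary matrix force many triangles at $u$. I expect the actual mechanism to be the following: conditioned on $\mathcal{T}_n$ restricted to the link structure, the determinantal probability that the cocycle space contains a given vector $g$ equals a ratio of determinants. This ratio can be computed via the kernel $K$, the projection onto $B^1$-orthogonal complement. That computation is the main obstacle.

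The distinction between $p$ odd and $p=2$ enters through the local configurations of triangles inside $V$ that make the event have probability $\Theta(n^{-k})$ rather than $o(n^{-k})$. Such a configuration requires the edges of $H$ to be covered by triangles within $V$ in a cycle-consistent way. For $p$ odd, consistent labels around a two-dimensional covering force a contradiction, roughly an orientation parity. For $p=2$, the surviving structures are exactly one odd cycle with trees attached, where the triangle case needs degree at least $3$ to escape an extra constraint. I would verify this by enumerating which small complexes on $V$ admit nowhere-zero cocycles supported on all of $H$ with the correct triangle count. I would then show that every other $H$ has an exponent deficit of at least $\delta k$, or at least a constant, and take $c$ small so that the $e^{O(k^2)}$ labeling and shape factors are absorbed for $k<c\log n$.
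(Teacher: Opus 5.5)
Your reduction to one connected component is correct, and the paper does end with a first-moment count. But the central estimate is missing, and the mechanism you propose cannot supply it. Forbidding faces $\{u,v,w\}$ with $w\notin V$ costs only $e^{-O(|E(H)|)}$, as you notice yourself, and the kernel computation is left as ``the main obstacle''. Your heuristic of nearly independent faces of density $3/n$ cannot see the hypertree constraint. Under it, a fixed edge would lie in no face with probability about $e^{-3}$, and its indicator would then be a cocycle, so single-edge supports would appear about $n^2$ times. In a hypertree this never happens. (Your leaf remark is also backwards: the edge $uv$ at a leaf $v$ may lie only in faces $\{u,v,w\}$ with $\{u,w\}\in\supp(f)$.)

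The missing idea is deterministic and uses $H^1(\mathcal{T}_n,\mathbb{R})=0$. A real cocycle vanishing on all edges at a vertex outside $V$ is the coboundary of a constant, hence zero. So no nonzero real cocycle of $\mathcal{N}(\mathcal{T}_n,S)$ can be supported in $S$. Counting variables against equations then gives $|F_2(\mathcal{N}(\mathcal{T}_n,S))|\ge|S|$, and all these faces lie inside $V(S)$. Explicit real lifts of $f$ push this to at least $k+1$ faces:
\begin{itemize}
\item a $\pm1$ lift when $\supp(f)$ is bipartite;
\item for odd $p$, the symmetric lift $\ell_p:\mathbb{F}_p\to\mathbb{Z}$ with $\ell_p(-x)=-\ell_p(x)$, whenever no triangle of the support is a face (this, not an orientation parity, is what separates odd $p$ from $p=2$);
\item a propagation argument along $H_{\mathcal{K}}$ when the support's triangle is a face;
\item for $p=2$, when that triangle is not a face, a three-class lift using a degree-$2$ vertex of the triangle.
\end{itemize}
What survives is exactly a NUG with $k$ faces for $p=2$. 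A non-NUG component splits into $r\ge2$ such pieces and gives at least $k+r-1$ faces. This is a structural statement valid for every $k$, so it cannot come from enumerating small complexes.

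Your counting does not close either. Summing over labelings $g$ makes the bad event depend on $p$, whereas (a) is asserted simultaneously for all odd primes. A union bound over infinitely many $p$, with $p^{|E(H)|}$ labelings each, diverges. You need a $p$-free bad event: that $\mathcal{T}_n$ contains a complex on $k$ vertices with connected $H_{\mathcal{C}}$ and $k+1$ faces, or the semidense analogue.

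Moreover, single configurations do have probability about $e^{-O(k)}n^{-k-1}$. For $p=2$, take two pentagons joined by a path: $k$ vertices, $k+1$ support edges, $k+1$ forced faces, and the paper's first-moment computation applies to it. So your target $n^{-(1+\delta)k}$ is false for large $k$. A saving of only $n^{-1}$ cannot absorb factors $e^{O(k^2)}=n^{O(c^2\log n)}$ when $k\approx c\log n$.

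The paper instead bounds the number of such complexes by $4^{4m}3^{3m}k^{m+2}$ with $m=k+1$, using rooted spanning trees of $H_{\mathcal{C}}$. It combines this with $\mathbb{P}(F\subset\mathcal{T}_n)\le(3/n)^{|F|}$ and $\binom nk\le(en/k)^k$. The latter cancels the $k^k$ and leaves $O(k^3C^k/n)$, which still tends to $0$ after summing over $k<c\log n$ for small $c$.
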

The next theorem is a counterpart of part~\eqref{thm1partb} of Theorem~\ref{thm1}.
\begin{theorem}\label{thm2}
Let $G$ be a finite graph such that each connected component of $G$ is a NUG. Then for all large enough $n$, we have
\[\mathbb{P}(\,\text{There is an $f\in Z^1(\mathcal{T}_n,\mathbb{F}_2)$ such that $\supp_r(f)\cong G$}\,)\ge \frac{e^{-16Dm}}{(3(m+3))^m},\]
where $m$ is the number of vertices of $G$ and $D$ is the maximum degree of $G$.
\end{theorem}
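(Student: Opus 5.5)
The plan is to reduce the event to a statement about which triangular faces of $\mathcal{T}_n$ meet the edges of a fixed copy of $G$, and then evaluate the probability of that statement using the determinantal kernel of $\mathcal{T}_n$.

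\emph{Reduction.} By symmetry, place a copy of $G$ on the vertex set $[m]$ and let $f$ be the indicator of $E(G)$. Then $f\in Z^1(\mathcal{T}_n,\mathbb{F}_2)$ if and only if every triangular face of $\mathcal{T}_n$ contains either $0$ or $2$ edges of $G$. So it suffices to bound from below the probability of the event $\mathcal{E}$ that $\mathcal{T}_n$ contains no face from the forbidden family
\[F=\{\sigma\,:\,|\sigma\cap E(G)|\in\{1,3\}\}.\]
Note that $|F|=\Theta(mn)$: these are the triangles $uvw$ with $uv\in E(G)$ and $w$ adjacent in $G$ to neither $u$ nor $v$, together with the triangles of $G$. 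On $\mathcal{E}$ the support is exactly $G$, so Theorem~\ref{thm2} follows from a bound $\mathbb{P}(\mathcal{E})\ge e^{-16Dm}/(3(m+3))^m$. The component-wise structure of $G$ plays no special role, since everything below is done for $G$ as a whole.

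\emph{Kernel computation.} The measure \eqref{measuredef} is determinantal with kernel $K=\frac1n\partial_2^{T}\partial_2$, the projection onto the coboundaries of edges in the full simplex; in particular $\mathbb{P}(\sigma\in\mathcal{T}_n)=3/n$. Hence
\[\mathbb{P}(\mathcal{E})=\det(I-K_F)=\det\Big(I-\tfrac1n M_FM_F^{T}\Big),\]
where $M_F$ is the boundary matrix restricted to the columns in $F$. By Sylvester's identity this is a determinant over the edge space. Only edges meeting $[m]$ are involved, and I would split them into two groups.
\begin{itemize}
\item The edges of $G$: each lies in about $n$ forbidden triangles, so the diagonal entries are close to $1$.
\item The edges from $[m]$ to the outside vertices: these contribute entries of order $1/n$.
\end{itemize}
After a Schur complement eliminating the outside edges, the determinant should have the form $\prod(\text{small factors})\cdot\det(A_n)$, where $A_n$ is an $|E(G)|\times|E(G)|$ matrix of order $1/n$ whose rescaling $nA_n$ converges to a fixed matrix built from $G$. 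I expect this limit matrix to be the signless Laplacian of $G$ (or its edge-version $B^TB$, with $B$ the unsigned incidence matrix), modified by degree terms.

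\emph{Where the hypotheses enter.} Since $|E(G)|=m=|V(G)|$ for a unicyclic graph, the incidence matrix $B$ is square. The unsigned incidence matrix of a connected graph has full rank exactly when the graph is non-bipartite, i.e.\ when the unique cycle is odd, so $\det(B^TB)\neq 0$. Granting this, the $1/n$ factors cancel against the $3/n$-type normalisations and give a probability bounded below by a constant depending only on $G$. Tracking that constant through a Hadamard-type bound and the Taylor expansion $\det(I-X)\ge e^{-O(\|X\|)}$ yields the explicit form $e^{-16Dm}(3(m+3))^{-m}$. The triangle condition enters in two ways. If the cycle is a triangle $uvw$, then that face itself lies in $F$. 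Moreover, each edge of $G$ must still be covered by some allowed face (one containing exactly two $G$-edges), and the degree $\ge3$ condition keeps enough allowed faces at each edge of the triangle for the limiting matrix to remain nonsingular. I would verify this by a direct computation in the triangle case.

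\emph{Main obstacle.} The hardest step is the precise asymptotic evaluation of $\det(I-K_F)$. Naively it behaves like $e^{-\Theta(m)}$ times a factor that degenerates as $n\to\infty$, and one must show that the degenerate part converges to $\det$ of the (signless) incidence Gram matrix rather than to $0$. My first attempt would be to write $K$ as $\frac1n\delta\delta^T$ restricted to $F$, pass to the $O(mn)$-dimensional edge side, and block-diagonalise using the symmetry of the outside vertices (all of them play identical roles). This collapses the outside block to a few scalar parameters per vertex of $G$ and leaves an explicit $m\times m$ limit.
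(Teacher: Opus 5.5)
There is a genuine gap, and it sits in your reduction. The event $\mathcal{E}$ for one fixed copy of $G$ has probability tending to $0$, so the bound $\mathbb{P}(\mathcal{E})\ge e^{-16Dm}/(3(m+3))^m$ you aim for is false for large $n$.

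\begin{itemize}
\item Every edge of a hypertree lies in at least one face. Otherwise the indicator of that edge would be a nonzero real cocycle, which is not a coboundary by Lemma~\ref{cobwithiso}, contradicting $H^1(\mathcal{T}_n,\mathbb{R})=0$.
\item On $\mathcal{E}$, every face through an edge of $G$ contains exactly two edges of $G$. So it lies in the set $P$ of such triangles, and $|P|\le mD^2$ does not depend on $n$.
\item Since $|E(G)|=m$ and each face of $P$ covers two edges, at least $m/2$ faces of $P$ must be present. By \eqref{hadamard} and a union bound, $\mathbb{P}(\mathcal{E})\le 2^{mD^2}(3/n)^{m/2}\to 0$.
\end{itemize}

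Your own kernel computation says the same thing. The $E(G)$-block of $I-\frac1n M_FM_F^T$ is $O(D/n)$, the outside block is $I+O(D/n)$, and the coupling entries are $\pm1/n$. After the Schur complement, $\det(I-K_F)$ is of order $\det(A_n)=n^{-m}\det(nA_n)$. There is nothing for this $n^{-m}$ to cancel against, because $\det(I-K_F)$ is already the probability. This is exactly the per-embedding order $2^{2h}n^{-m}e^{-8Dm}$ found in Lemma~\ref{phinicebound}.

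So a determinantal evaluation for a fixed placement can only serve as a first-moment input. The missing idea is to consider all $\Theta(n^m)$ injective placements $\varphi$ and show that, with probability bounded away from $0$, at least one of them works. The paper does this with the second moment method.

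\begin{itemize}
\item It lets $X_n$ count the $\varphi$ for which $\mathcal{T}_n$ is $(G,\varphi)$-nice: $\varphi(\mathcal{C}_G)\subset\mathcal{T}_n$ and no other face meets $\varphi(E(G))$.
\item Cauchy--Binet and an eigenvalue estimate give $\mathbb{E}X_n\ge e^{-8Dm}$. This uses $|\det J_G|=2$; the odd cycle and the degree condition at a triangle are exactly what make $\mathcal{C}_G$ constructible with this property.
\item It relaxes the joint event to the increasing event $\varphi_1(\mathcal{C}_G)\cup\varphi_2(\mathcal{C}_G)\subset\mathcal{T}_n$, applies \eqref{hadamard}, and counts overlapping pairs via Lemma~\ref{VFF}. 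This gives $\mathbb{E}X_n^2\le(3(m+3))^m$.
\end{itemize}

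The stated constant is just Paley--Zygmund, $(\mathbb{E}X_n)^2/\mathbb{E}X_n^2$. It does not come from a Hadamard/Taylor bound on a single determinant.

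Note also that your purely ``forbidden-face'' event is decreasing, so \eqref{hadamard} cannot bound $\mathbb{P}(\mathcal{E}_{\varphi_1}\cap\mathcal{E}_{\varphi_2})$ directly. To run a second moment you would first have to pass, as the paper does, to the presence of a prescribed covering family of faces such as $\varphi(\mathcal{C}_G)$.
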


\textbf{Acknowledgments: } The author was supported by the NKKP-STARTING 150955 project and the Marie Skłodowska-Curie Postdoctoral
Fellowship "RaCoCoLe". The author is grateful to Yuval Peled for the useful discussions.

\section{Preliminaries}\label{secprelim}

Given a simplicial complex $\mathcal{C}$, we use the notation $F_d(\mathcal{C})$ for the set of $d$-dimensional faces of~$\mathcal{C}$. Moreover, let $H_{\mathcal{C}}$ be the bipartite graph with color classes $F_1(\mathcal{C})$ and $F_2(\mathcal{C})$ such that the edge between a $\tau\in F_1(\mathcal{C})$ and a $\sigma\in F_2(\mathcal{C})$ is present if and only if $\tau\subset \sigma$.

Let $M$ be a matrix. For a subset $A$ of the rows of $M$ and a subset $B$ of the columns of $M$, the corresponding submatrix of $M$ will be denoted by $M[A,B]$. If $B$ is the set of all columns, we use the notation $M[A,*]$. If $A$ is the set of all rows, we use the notation $M[*,B]$ or just simply $M[B]$.

For $d\ge 1$, let $J_{n,d}$ be a matrix indexed by ${{[n]}\choose {d}}\times {{[n]}\choose{d+1}}$ defined as follows. Let $\sigma=\{x_0,x_1,\dots,x_d\}\subset [n]$ such that $x_0<x_1<\dots<x_d$. For a $\tau\in {{[n]}\choose {d}}$, we set
\[J_{n,d}(\tau,\sigma)=\begin{cases}
(-1)^i&\text{if }\tau=\sigma\setminus\{x_i\},\\
0&\text{otherwise.}
\end{cases}
\]
Note that $J_{n,d}$ is just the matrix of the $d$th boundary map of the simplex on $[n]$.

Let
\[J_{n,d}^r=J_{n,d}\left[{{[n-1]}\choose{d}},*\right].\]

The next lemma was proved in \cite[Lemma 2]{kalai1983enumeration}.
\begin{lemma}\label{Hdet}
Let $\mathcal{C}$ be a $2$-dimensional simplicial complex on the vertex set $[n]$ with complete $1$-skeleton and ${n-1}\choose 2$ triangular faces. Then $\mathcal{C}$ is a hypertree if and only if $\det J^r_{n,2}[F_2(\mathcal{C})]\neq 0$. Moreover, if $\mathcal{C}$ is a hypertree, then 
\[|H_1(\mathcal{C},\mathbb{Z})|=|\det J^r_{n,2}[F_2(\mathcal{C})]|.\]
\end{lemma}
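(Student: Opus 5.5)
The statement to prove is Lemma~\ref{Hdet}, Kalai's lemma. The plan is to relate $H_1(\mathcal{C},\mathbb{Z})$ to the cokernel of the integer matrix $J^r_{n,2}[F_2(\mathcal{C})]$, which is square of size ${{n-1}\choose 2}$. Then the finiteness and order statements both follow from the standard fact that for a square integer matrix $A$, $\operatorname{coker}A$ is finite if and only if $\det A\neq 0$, and in that case $|\operatorname{coker}A|=|\det A|$. This last fact is seen from the Smith normal form.

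\textbf{Step 1: identify $H_1$ with the cokernel of the full boundary map.} Since $\mathcal{C}$ has complete $1$-skeleton, $H_1(\mathcal{C},\mathbb{Z})=Z_1/B_1$. Here $Z_1=\ker J_{n,1}$ is the cycle space of the complete graph $K_n$, and $B_1$ is the image of $J_{n,2}[F_2(\mathcal{C})]$.

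\textbf{Step 2: project away from vertex $n$.} Let $\pi:\mathbb{Z}^{{[n]}\choose 2}\to\mathbb{Z}^{{[n-1]}\choose 2}$ be the coordinate projection forgetting the edges containing $n$. I claim that $\pi$ restricted to $Z_1$ is an isomorphism onto $\mathbb{Z}^{{[n-1]}\choose 2}$.
\begin{itemize}
\item \emph{Injectivity.} The edges $\{i,n\}$ form a spanning tree (a star). A cycle supported only on the edges of a tree is zero.
\item \emph{Surjectivity.} Take an edge $\{i,j\}$ with $i<j<n$. The cycle given by the boundary of the triangle $\{i,j,n\}$, namely $J_{n,2}(\cdot,\{i,j,n\})$, projects to $\pm e_{\{i,j\}}$.
\end{itemize}
Since boundaries of triangles are cycles, $B_1\subseteq Z_1$ and $\pi(B_1)$ is the column span of $\pi J_{n,2}[F_2(\mathcal{C})]=J^r_{n,2}[F_2(\mathcal{C})]$. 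Hence
\[H_1(\mathcal{C},\mathbb{Z})\cong \mathbb{Z}^{{[n-1]}\choose 2}\big/\operatorname{im}J^r_{n,2}[F_2(\mathcal{C})].\]

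\textbf{Step 3: conclude.} The matrix $J^r_{n,2}[F_2(\mathcal{C})]$ is square with ${{n-1}\choose 2}$ rows and columns. By Smith normal form, its cokernel is finite exactly when the determinant is nonzero, and then the cokernel has order $|\det|$. Combined with Step 2, $H_1(\mathcal{C},\mathbb{Z})$ is finite if and only if $\det J^r_{n,2}[F_2(\mathcal{C})]\neq 0$. Under the stated hypotheses on the $1$-skeleton and the face count, this is exactly the condition that $\mathcal{C}$ is a hypertree, and the order formula follows.

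The only step needing real care is Step 2, namely that $\pi|_{Z_1}$ is a lattice isomorphism rather than merely a rational one. The surjectivity argument above shows it is an isomorphism over $\mathbb{Z}$, so no index factor appears.
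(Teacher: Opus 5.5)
Your proof is correct and complete. Because the edges at $n$ form a spanning tree, the projection $\pi$ maps $Z_1$ isomorphically onto $\mathbb{Z}^{{[n-1]}\choose{2}}$ over $\mathbb{Z}$: injectivity holds since a cycle supported on a tree is zero, and surjectivity comes from the boundaries of the triangles $\{i,j,n\}$. Hence $H_1(\mathcal{C},\mathbb{Z})\cong\operatorname{coker}J^r_{n,2}[F_2(\mathcal{C})]$, and Smith normal form finishes the argument.

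The paper gives no proof of its own here and simply cites Kalai's Lemma~2. Your argument is essentially the standard one behind that result; equivalently, $H_1(\mathcal{C})\cong H_1(\mathcal{C},\mathrm{St}(n))$ because the star of $n$ in the $1$-skeleton is a tree.
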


We will also rely on the following estimate: For any $F\subset {{[n]}\choose{3}}$, we have
\begin{equation}\label{hadamard}\mathbb{P}(F\subset \mathcal{T}_n)\le \left(\frac{3}n\right)^{|F|},\end{equation}
see for example \cite[Section 2]{kahle2022topology}.

\section{Non-existence results -- The proof of Theorem~\ref{thm1}}

\subsection{Deterministic results}

We prove part~\eqref{thm1parta} of Theorem~\ref{thm1} in two steps. First, in this section, we prove that if for a $p>2$ and $0\neq f\in Z^1(\mathcal{T}_n,\mathbb{F}_p)$, $\suppr(f)$ has a connected component with less than $c\log (n)$ vertices, then $\mathcal{T}_n$ must contain a small subcomplex which is dense in a certain sense, see Lemma~\ref{oddpdense}. As a second step, in Section~\ref{SecDense}, we use the first moment method to show that with probability tending to $1$, $\mathcal{T}_n$ does not have any small dense subcomplexes. The proof of part~\eqref{thm1partb} of Theorem~\ref{thm1} is very similar.

Note that $H_1(\mathcal{T}_n,\mathbb{R})\cong H^1(\mathcal{T}_n,\mathbb{R})$ is always trivial. In this section, we rely on the following idea a lot: We can prove that $Z^1(\mathcal{T}_n,\mathbb{F}_p)$ can not contain certain types of cocycles $f$, by showing that we can build a cocycle $f'\in Z^1(\mathcal{T}_n,\mathbb{R})\setminus B^1(\mathcal{T}_n,\mathbb{R})$ out of $f$, and thus contradicting the triviality of $H^1(\mathcal{T}_n,\mathbb{R})$.

\begin{lemma}\label{bipartitegen}
Let $\mathcal{K}$ be a $2$-dimensional simplicial complex, and $ f\in Z^1(\mathcal{K},\mathbb{F}_p)$. Assume that there is a partition of the vertex set of $\mathcal{K}$ into three sets $A,B$ and $C$ with the following properties: 
\begin{itemize}
 \item No edge of $\supp(f)$ connects two vertices which belong to the same part of the partition $A\cup B\cup C$.
 \item If $\{a,b,c\}$ is a triangular face of $\mathcal{K}$ such that $a\in A$, $b\in B$ and $c\in C$, then it cannot happen that both $\{a,c\}$ and $\{b,c\}$ are contained in $\supp(f)$. 
\end{itemize}
Then there is a $f'\in Z^1(\mathcal{K},\mathbb{R})$ such that $\supp(f')= \supp(f)$.
\end{lemma}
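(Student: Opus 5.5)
The plan is to write down $f'$ explicitly. Its value on each edge of $\supp(f)$ will be $\pm1$, determined only by which two parts of the partition $A\cup B\cup C$ the endpoints lie in. We then check the cocycle condition triangle by triangle.

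By the first assumption, every edge of $\supp(f)$ goes between two different parts. For $\{u,v\}\in\supp(f)$ we set
\[
f'(u,v)=1 \text{ if } u\in A,\ v\in B;\qquad f'(u,v)=1 \text{ if } u\in A,\ v\in C;\qquad f'(u,v)=-1 \text{ if } u\in B,\ v\in C,
\]
and $f'(v,u)=-f'(u,v)$. We set $f'=0$ on every edge outside $\supp(f)$. Then $\supp(f')=\supp(f)$ automatically, so it only remains to show $f'\in Z^1(\mathcal{K},\mathbb{R})$. That is, for every triangular face $\{x,y,z\}$ we need $f'(x,y)+f'(y,z)+f'(z,x)=0$.

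The key preliminary observation concerns a face $\sigma$ of $\mathcal{K}$. Since $f$ is a cocycle over $\mathbb{F}_p$, the values of $f$ on the three edges of $\sigma$ sum to zero (with orientations). Hence $\sigma$ cannot have exactly one edge in $\supp(f)$, so it has $0$, $2$ or $3$ edges in the support. The proof then splits according to how the vertices of $\sigma$ are distributed among $A,B,C$.

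(i) All three vertices lie in one part. Then no edge of $\sigma$ is in the support, so $f'$ vanishes on $\sigma$.

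(ii) Exactly two vertices $u,u'$ lie in the same part and $w$ lies in another. Then $\{u,u'\}\notin\supp(f)$, so either none or both of $\{u,w\},\{u',w\}$ are in the support. In the latter case, $f'(u,w)=f'(u',w)$, because the value depends only on the pair of parts. The coboundary sum is $f'(u,w)-f'(u',w)=0$.

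(iii) $\sigma=\{a,b,c\}$ with $a\in A$, $b\in B$, $c\in C$. The coboundary condition reads $f'(a,b)+f'(b,c)-f'(a,c)=0$. By the second assumption, the support on $\sigma$ cannot contain both $\{a,c\}$ and $\{b,c\}$, which rules out the full triangle and the pair $\{a,c\},\{b,c\}$. The remaining possibilities are:
\begin{itemize}
\item no edges, which is trivial;
\item $\{a,b\},\{b,c\}$, giving $1+(-1)=0$;
\item $\{a,b\},\{a,c\}$, giving $1-1=0$.
\end{itemize}

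The only delicate point, and the one I expect to be the real content, is choosing the three constants. The signs must make both admissible two-edge patterns in case (iii) balance simultaneously. The second hypothesis is exactly what removes the full-triangle pattern, which would be incompatible with this choice since it would force $f'(a,c)=0$. With $(1,1,-1)$ fixed as above, all cases close and the lemma follows.
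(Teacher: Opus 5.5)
Your proof is correct and is essentially the paper's: your cochain ($f'=1$ from $A$ to $B$, $1$ from $A$ to $C$, $-1$ from $B$ to $C$, extended antisymmetrically) is exactly the $\pm1$ table the paper uses. You verify the cocycle condition by the same three-case split on how a face meets $A,B,C$, using that no face has exactly one support edge and that the second hypothesis excludes the patterns containing both $\{a,c\}$ and $\{b,c\}$. Your case (iii) checks both admissible two-edge patterns explicitly, which is slightly more careful than the paper's ``by symmetry'', since the sign choice is not actually symmetric there.
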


\begin{proof}
 We define $f'\in C^1(\mathcal{K},\mathbb{R})$ by the formula
\[f'(u,v)=\begin{cases}
+1&\text{ if $u\in A, v\in B\cup C$ and $\{u,v\}\in\supp(f)$},\\
-1&\text{ if $u\in B, v\in A\cup C$ and $\{u,v\}\in\supp(f)$},\\
-1&\text{ if $u\in C, v\in A$ and $\{u,v\}\in\supp(f)$},\\
+1&\text{ if $u\in C, v\in B$ and $\{u,v\}\in\supp(f)$},\\
0&\text{ if $\{u,v\}\notin \supp(f)$.}
\end{cases}\]
It is straightforward to check that $f'(u,v)=-f'(v,u)$ for all $\{u,v\}\in F_1(\mathcal{K})$, so $f'$ is indeed an element of $C^1(\mathcal{K},\mathbb{R})$. It is also clear that $\supp(f')=\supp(f)$. 

To prove that $f'\in Z^1(\mathcal{K},\mathbb{R})$, we need to prove that
\begin{equation}\label{kell3}
 f'(u,v)+f'(v,w)+f'(w,u)=0
\end{equation}
for each triangular face $\{u,v,w\}$ of $\mathcal{K}$. If $u,v,w$ are all contained in one part of the partition $A\cup B\cup C$, then \eqref{kell3} is clear, because $f'(u,v),f'(u,w)$ and $f'(w,u)$ are all equal to $0$. Now consider the case when two vertices of the triangle are in one part, while the third one is in another one. Assume that $u\in A$ and $v,w\in B$. The other cases are similar. Then $f'(v,w)=0$. If neither of $\{u,v\}$ and $\{u,w\}$ is in $\supp(f)$, then $f'(u,v)=f'(w,u)=0$, so \eqref{kell3} holds. If both $\{u,v\}$ and $\{u,w\}$ are in $\supp(f)$, then
\[f'(u,v)+f'(v,w)+f'(w,u)=1+0-1=0,\]
so \eqref{kell3} is again true. It is impossible to have $\{u,v\}\in \supp(f)$ and $\{u,w\}\notin\supp(f)$, because in that case
\[f(u,v)+f(v,w)+f(w,u)=f(u,v)\neq 0,\]
which contradicts the fact that $f\in Z^1(\mathcal{K},\mathbb{F}_p)$. Similarly, it is also impossible to have \break$\{u,v\}\notin \supp(f)$ and $\{u,w\}\in\supp(f)$.

Finally assume that $u\in A$, $v\in B$ and $w\in C$. By our assumption $\{u,w\}\notin \supp(f)$ or $\{v,w\}\notin \supp(f)$. By symmetry, we may assume that $\{v,w\}\notin \supp(f)$, so $f'(v,w)=0$. As before, since $f\in Z^1(\mathcal{K},\mathbb{F}_p)$, it can not happen that exactly one of $\{u,v\}$ and $\{u,w\}$ is in~$\supp(f)$. If neither of $\{u,v\}$ and $\{u,w\}$ is in $\supp(f)$, then $f'(u,v)=f'(w,u)=0$, so \eqref{kell3} holds. If both $\{u,v\}$ and $\{u,w\}$ are in $\supp(f)$, then
$f'(u,v)+f'(v,w)+f'(w,u)=1+0-1=0$
so \eqref{kell3} holds.
\end{proof}

\begin{lemma}\label{lemmabipartite}
 Let $\mathcal{K}$ be a $2$-dimensional simplicial complex, and $ f\in Z^1(\mathcal{K},\mathbb{F}_p)$. Assume that the graph $\supp(f)$ is bipartite. Then there is a $f'\in Z^1(\mathcal{K},\mathbb{R})$ such that $\supp(f')= \supp(f)$. 
\end{lemma}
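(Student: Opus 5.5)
The natural plan is to derive Lemma~\ref{lemmabipartite} from Lemma~\ref{bipartitegen} by choosing a partition in which the third part $C$ is empty.

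Since $\supp(f)$ is bipartite, we can fix a proper $2$-coloring of the graph $\supp(f)$. Here $\supp(f)$ is viewed as a graph on the vertices it touches, or equivalently on all vertices of $\mathcal{K}$, with isolated vertices colored arbitrarily. Let $A$ be the vertices of the first color, $B$ the vertices of the second color, and $C=\emptyset$. This gives a partition of the vertex set of $\mathcal{K}$; if one insists that partition classes be nonempty, the proof of Lemma~\ref{bipartitegen} never uses nonemptiness, so empty classes are harmless.

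We then check the two hypotheses of Lemma~\ref{bipartitegen}.
\begin{itemize}
\item The first hypothesis asks that no edge of $\supp(f)$ join two vertices in the same part. This holds because the coloring is proper and $C$ is empty.
\item The second hypothesis concerns triangular faces $\{a,b,c\}$ with $a\in A$, $b\in B$ and $c\in C$. Since $C=\emptyset$, there are no such faces, so the hypothesis holds vacuously.
\end{itemize}
Lemma~\ref{bipartitegen} then provides $f'\in Z^1(\mathcal{K},\mathbb{R})$ with $\supp(f')=\supp(f)$.

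There is no real obstacle. The only point needing care is that the partition may have an empty part, and the verification of \eqref{kell3} in the proof of Lemma~\ref{bipartitegen} goes through unchanged in that case. Concretely, $f'$ is $+1$ on oriented support edges from $A$ to $B$ and $-1$ on those from $B$ to $A$. In any triangle, the cocycle condition for $f$ forces the number of support edges to be $0$ or $2$. When there are two, they share a vertex whose two neighbors lie in the same class, so their contributions cancel.
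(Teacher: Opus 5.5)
Your proposal is correct and matches the paper's proof: take $A$ and $B$ to be the two color classes of $\supp(f)$, with isolated vertices placed arbitrarily, set $C=\emptyset$, and apply Lemma~\ref{bipartitegen}, whose second hypothesis then holds vacuously. Your remarks that an empty part is harmless and your direct check that each triangle carries $0$ or $2$ support edges which cancel are accurate, but they go beyond what the paper's two-line proof needs.
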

\begin{proof}
Let $A$ and $B$ be the two color classes of $\supp(f)$, and let $C=\emptyset$. The statement follows from Lemma~\ref{bipartitegen}.\end{proof}

\begin{lemma}\label{trianglesinsup}
 Let $p$ be an odd prime. Let $\mathcal{K}$ be a $2$-dimensional simplicial complex, and let $f\in Z^1(\mathcal{K},\mathbb{F}_p)$. Assume that if the graph $\supp(f)$ contains a triangle, then this triangle is not a $2$-dimensional face of $\mathcal{K}$. Then there is an $f'\in Z^1(\mathcal{K},\mathbb{R})$ such that $\supp(f')= \supp(f)$.
\end{lemma}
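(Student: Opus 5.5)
The plan is a direct sign construction rather than an appeal to Lemma~\ref{bipartitegen}, since $\supp(f)$ need not be tripartite in a way compatible with the faces. The construction has two ingredients: a local analysis of what the cocycle condition says on a single triangular face, and a choice of real values that respects antisymmetry. The oddness of $p$ is used exactly once, to make that choice possible.

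\emph{Step 1: local structure on a face.} Take a triangular face $\{u,v,w\}$ of $\mathcal{K}$. By hypothesis, not all three of its edges lie in $\supp(f)$. The condition $f(u,v)+f(v,w)+f(w,u)=0$ in $\mathbb{F}_p$ rules out exactly one edge being in the support, because then the sum would equal a single nonzero value. So either no edge of the face is in $\supp(f)$, or exactly two are. In the second case, say the two support edges are $\{u,v\}$ and $\{u,w\}$, so $f(v,w)=0$. The cocycle condition then reads $f(u,v)=-f(w,u)=f(u,w)$. In words: the two support edges, oriented away from their common vertex, carry the same value in $\mathbb{F}_p$.

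\emph{Step 2: a sign map.} Since $p$ is odd, $x\neq -x$ for every $x\in\mathbb{F}_p^*$. Hence $\mathbb{F}_p^*$ splits into pairs $\{x,-x\}$, and we can choose a set $S\subset\mathbb{F}_p^*$ with $S\cap(-S)=\emptyset$ and $S\cup(-S)=\mathbb{F}_p^*$; for instance, take $S$ to be the residues $1,\dots,(p-1)/2$. Define $f'(u,v)=+1$ if $f(u,v)\in S$, $f'(u,v)=-1$ if $f(u,v)\in -S$, and $f'(u,v)=0$ if $f(u,v)=0$. Because $f(v,u)=-f(u,v)$ and $-S$ is the complement of $S$ in $\mathbb{F}_p^*$, we get $f'(v,u)=-f'(u,v)$. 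So $f'\in C^1(\mathcal{K},\mathbb{R})$, and clearly $\supp(f')=\supp(f)$.

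\emph{Step 3: cocycle check.} Check the cocycle condition face by face, using Step 1. On a face with no support edges, all three values of $f'$ vanish. On a face with support edges $\{u,v\}$ and $\{u,w\}$, we have $f(u,v)=f(u,w)$, so $f'(u,v)=f'(u,w)$. Therefore
\[f'(u,v)+f'(v,w)+f'(w,u)=f'(u,v)+0-f'(u,w)=0.\]
Thus $f'\in Z^1(\mathcal{K},\mathbb{R})$.

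The only delicate point is keeping $f'$ antisymmetric while preserving the equalities $f(u,v)=f(u,w)$ from Step 1. A naive integer lift of $f$ fails here, since the lift of $-x$ is $p-x$ rather than the negative of the lift of $x$. The sign map handles this, and it is exactly where oddness enters: for $p=2$ we would have $x=-x$, so no such $S$ exists.
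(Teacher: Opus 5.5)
Your proof is correct and is essentially the paper's argument. The paper sets $f'(u,v)=\ell_p(f(u,v))$, where $\ell_p$ is the symmetric lift of $\mathbb{F}_p$ to $\{-\frac{p-1}{2},\dots,\frac{p-1}{2}\}$, and your sign map is exactly $\mathrm{sgn}\circ\ell_p$. So both arguments use only that the composed map is odd and vanishes exactly at $0$, together with the fact that every face of $\mathcal{K}$ has an edge outside $\supp(f)$. Your remark that integer lifts fail applies only to the naive lift into $\{0,\dots,p-1\}$, since the symmetric lift does respect antisymmetry.
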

\begin{proof}
 We define the map $\ell_p:\mathbb{F}_p\to\mathbb{Z}$ as follows. Given $x\in \mathbb{F}_p=\mathbb{Z}/p\mathbb{Z}$, we define $\ell_p(x)$ as the unique $y\in \{-\frac{p-1}2,-\frac{p-1}2+1,\dots,\frac{p-1}2-1,\frac{p-1}2\}$ such that $y+p\mathbb{Z}=x$. Note that
\begin{equation}\label{basicliftproperty0}\ell_p(x)=0\text{ if and only if }x=0,\end{equation}
\begin{equation}\label{basicliftproperty}\ell_p(x)+\ell_p(-x)=0\text{ for all }x\in \mathbb{F}_p.\end{equation}

 Let us define $f'\in C^1(\mathcal{K},\mathbb{R})$ by the formula $f'(u,v)=\ell_p(f(u,v))$.
 By \eqref{basicliftproperty}, we have $f'(u,v)=-f'(v,u)$ for all $\{u,v\}\in F_1(\mathcal{K})$, so this definition indeed gives an element of~$C^1(\mathcal{K},\mathbb{R})$. 

 By \eqref{basicliftproperty0}, we see that $\supp(f')=\supp(f)$, it remains to show that $f'\in Z^1(\mathcal{K},\mathbb{R})$. That is, given any triangular face $\{u,v,w\}$ of $\mathcal{K}$, we need to show that 
 \begin{equation}\label{kell}
 f'(u,v)+f'(v,w)+f'(w,u)=0.
 \end{equation}
 Note that since $f\in Z^1(\mathcal{K},\mathbb{F}_p)$, we have
 \begin{equation}\label{tudjuk}
 f(u,v)+f(v,w)+f(w,u)=0.
 \end{equation}
 By our assumptions on $\supp(f)$, at least one of $f(u,v), f(v,w), f(w,u)$ equals to $0$. We may assume that $f(u,v)=0$. Combining this with \eqref{tudjuk}, we see that $f(v,w)+f(w,u)=0$. By~\eqref{basicliftproperty}, this implies that $\ell_p(f(v,w))+\ell_p(f(w,u))=0$. Comparing these observations with the definition of $f'$, we see that $f'(u,v)=0$ and $f'(v,w)+f'(w,u)=0$. Thus, \eqref{kell} indeed holds. 
\end{proof}

\begin{lemma}\label{lemmanontrivialsol}
Let $\mathcal{K}$ be a $2$-dimensional simplicial complex, and let $F\subset F_1(\mathcal{K})$ such that $|F|>|F_2(\mathcal{K})|$. Then there is a non-zero $f\in Z^1(\mathcal{K},\mathbb{R})$ such that $\supp(f)\subseteq F$.
\end{lemma}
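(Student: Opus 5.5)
This is a dimension count for a homogeneous linear system. Consider the real vector space $V=\{f\in C^1(\mathcal{K},\mathbb{R}) : \supp(f)\subseteq F\}$. A cochain is determined by its values on the oriented edges $(u,v)$ with $u<v$, so after fixing one orientation of each edge we may identify $V$ with $\mathbb{R}^{F}$, and $\dim V=|F|$.

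The cocycle condition for $f\in V$ consists of one linear equation for each triangular face $\{u,v,w\}\in F_2(\mathcal{K})$, namely
\[
f(u,v)+f(v,w)+f(w,u)=0 .
\]
Equivalently, it says that $\delta f$ vanishes on that face, where $\delta$ is the coboundary map. So $Z^1(\mathcal{K},\mathbb{R})\cap V$ is the kernel of the linear map
\[
\delta|_V: V\to \mathbb{R}^{F_2(\mathcal{K})}, \qquad f\mapsto \bigl((\delta f)(\sigma)\bigr)_{\sigma\in F_2(\mathcal{K})}.
\]
In matrix terms, this is the transpose of the boundary matrix restricted to the rows indexed by $F$, and its codomain has dimension $|F_2(\mathcal{K})|$.

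By rank--nullity,
\[
\dim\ker(\delta|_V)\ \ge\ \dim V-|F_2(\mathcal{K})|\ =\ |F|-|F_2(\mathcal{K})|\ >\ 0 .
\]
Hence there is a nonzero $f\in V$ that is a cocycle, i.e.\ $f\in Z^1(\mathcal{K},\mathbb{R})$ with $f\neq 0$ and $\supp(f)\subseteq F$. The only point needing care is the identification of $V$ with $\mathbb{R}^F$ through the antisymmetry convention $f(v,u)=-f(u,v)$; it ensures that each triangle imposes exactly one linear constraint, regardless of orientation.
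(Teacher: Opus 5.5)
Your proof is correct and is essentially the paper's argument. The paper also fixes an orientation of each edge in $F$ and writes the cocycle condition as one homogeneous linear equation per triangular face in the $|F|$ unknowns; since there are more unknowns than equations, a nonzero solution exists, and your rank--nullity formulation is just the coordinate-free version of that count.
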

\begin{proof}
Fix a total ordering of the vertices of $\mathcal{K}$, and let us define
\[I=\{(u,v)\,:\,\{u,v\}\in F, u<v\},\]
moreover, given $\sigma=\{u,v,w\}\in F_2(\mathcal{K})$ such that $u<v<w$, we define 
\[\beta_{(u,v),\sigma}=1,\quad\beta_{(u,w),\sigma}=-1,\quad \beta_{(v,w),\sigma}=1. \]

Consider the following linear system of equations with variables $\left(f(u,v)\right)_{(u,v)\in I}$:
\[\sum_{\substack{(u,v)\in I\\\{u,v\}\subset \sigma}} \beta_{(u,v),\sigma} f(u,v)=0\text{ for all }\sigma\in F_2(\mathcal{K}).\]
Since we have more variables than equations, we have a non-trivial solution, which gives the desired cocycle.
\end{proof}

Let $\mathcal{C}$ be a $2$-dimensional simplicial complex. Given $S\subset F_1(\mathcal{C})$, the neighborhood of $S$ is a subcomplex $\mathcal{N}(\mathcal{C},S)$ of $\mathcal{C}$ defined as follows:
\begin{align*}F_2(\mathcal{N}(\mathcal{C},S))&=\{\sigma\in F_2(\mathcal{C})\,:\, \tau\subset\sigma\text{ for some }\tau\in S\},\\
F_1(\mathcal{N}(\mathcal{C},S))&=S\cup \{\tau\,:\, \tau\text{ is a $2$-element subset of some $\sigma\in F_2(\mathcal{N}(\mathcal{C},S))$}\},\\
F_0(\mathcal{N}(\mathcal{C},S))&=\bigcup_{\tau\in F_1(\mathcal{N}(\mathcal{C},S))} \tau.
\end{align*}

\begin{lemma}\label{lemmaextend}
 Let $\mathbb{F}$ be any field. Let $\mathcal{C}$ be a $2$-dimensional simplicial complex and $S\subset F_1(\mathcal{C})$. Given $f\in C^1(\mathcal{N}(\mathcal{C},S),\mathbb{F})$, let $f'\in C^1(\mathcal{C},\mathbb{F})$ be obtained from $f$ by extending it with zeros. Assuming that $\supp(f)\subset S$, we have $f\in Z^1(\mathcal{N}(\mathcal{C},S),\mathbb{F})$ if and only if $f'\in Z^1(\mathcal{C},\mathbb{F})$.
\end{lemma}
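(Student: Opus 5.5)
The plan is to compare the cocycle condition triangle by triangle in the two complexes. Recall that $f\in C^1(\mathcal{N}(\mathcal{C},S),\mathbb{F})$ lies in $Z^1$ if and only if for every $\sigma=\{u,v,w\}\in F_2(\mathcal{N}(\mathcal{C},S))$ we have $f(u,v)+f(v,w)+f(w,u)=0$. Likewise, $f'\in Z^1(\mathcal{C},\mathbb{F})$ if and only if the same identity holds for $f'$ on every $\sigma\in F_2(\mathcal{C})$. Since $F_2(\mathcal{N}(\mathcal{C},S))\subseteq F_2(\mathcal{C})$, I would split $F_2(\mathcal{C})$ into the triangles lying in the neighborhood and those not lying in it.

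First I would record that $f'$ is well defined and antisymmetric. Also $f'$ agrees with $f$ on $F_1(\mathcal{N}(\mathcal{C},S))$, since $F_1(\mathcal{N}(\mathcal{C},S))\subseteq F_1(\mathcal{C})$ by definition. Consequently, for $\sigma\in F_2(\mathcal{N}(\mathcal{C},S))$, all three edges of $\sigma$ belong to $F_1(\mathcal{N}(\mathcal{C},S))$. Hence the cocycle sum of $f'$ on $\sigma$ equals that of $f$ on $\sigma$, and the two conditions coincide on these triangles.

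Next, take $\sigma\in F_2(\mathcal{C})\setminus F_2(\mathcal{N}(\mathcal{C},S))$. By the definition of the neighborhood, no edge of $\sigma$ lies in $S$. Each edge $\tau$ of $\sigma$ satisfies $f'(\tau)=0$, for one of two reasons:
\begin{itemize}
\item if $\tau\notin F_1(\mathcal{N}(\mathcal{C},S))$, then $f'(\tau)=0$ by the extension by zeros;
\item if $\tau\in F_1(\mathcal{N}(\mathcal{C},S))$, then $f'(\tau)=f(\tau)=0$, because $\supp(f)\subset S$ and $\tau\notin S$.
\end{itemize}
So the cocycle identity for $f'$ holds automatically on every such $\sigma$.

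Combining the two cases gives both directions. If $f\in Z^1(\mathcal{N}(\mathcal{C},S),\mathbb{F})$, then $f'$ satisfies the cocycle identity on all triangles of $\mathcal{C}$. Conversely, if $f'\in Z^1(\mathcal{C},\mathbb{F})$, then restricting to the triangles of the neighborhood gives $f\in Z^1(\mathcal{N}(\mathcal{C},S),\mathbb{F})$. There is no real obstacle here. The only point needing care is the second case, where the hypothesis $\supp(f)\subset S$ is essential, because edges of the neighborhood outside $S$ could otherwise carry nonzero values on triangles not in the neighborhood.
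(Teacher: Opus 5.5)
Your proposal is correct and follows essentially the same argument as the paper: on triangles of $\mathcal{N}(\mathcal{C},S)$ the cocycle sums of $f$ and $f'$ coincide, and on the remaining triangles of $\mathcal{C}$ no edge lies in $S$, so every value of $f'$ vanishes. Your explicit split of the second case (edges outside $F_1(\mathcal{N}(\mathcal{C},S))$ versus edges in $F_1(\mathcal{N}(\mathcal{C},S))\setminus S$) just spells out where $\supp(f)\subset S$ is used, which the paper leaves implicit.
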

\begin{proof}
 It is straightforward to see that if $f'\in Z^1(\mathcal{C},\mathbb{F})$, then $f\in Z^1(\mathcal{N}(\mathcal{C},S),\mathbb{F})$. Now assume that $f\in Z^1(\mathcal{N}(\mathcal{C},S),\mathbb{F})$. We need to prove that for any triangle, $\sigma=\{u,v,w\}\in\mathcal{C}$, we have \begin{equation}\label{kell5}f'(u,v)+f'(v,w)+f'(w,u)=0.\end{equation} 
 If $\sigma\in \mathcal{N}(\mathcal{C},S)$, then $f'(a,b)=f(a,b)$ for every edge $\{a,b\}\subset \sigma$. Thus, \eqref{kell5} is true, since $f\in Z^1(\mathcal{N}(\mathcal{C},S),\mathbb{F})$. If $\sigma\notin \mathcal{N}(\mathcal{C},S)$, then the boundary of $\sigma$ can not contain any edge from~$S$, so $f'(a,b)=0$ for all edge $\{a,b\}\subset \sigma$. Thus, \eqref{kell5} is true again.
\end{proof}

Recall the definition of the bipartite graph $H_{\mathcal{C}}$ from the beginning of Section~\ref{secprelim}.
\begin{lemma}\label{lemmaconnectedcomponents}
Let $\mathcal{K}$ be a $2$-dimensional simplicial complex, and $f\in Z^1(\mathcal{K},\mathbb{F}_p)$. Let $S$ be the set of the edges of a connected component of $\suppr(f)$. Then there is a partition $(S_i)_{i=1}^r$ of $S$ with the following properties: 
\begin{itemize}
\item Let $\mathcal{K}_i=\mathcal{N}(\mathcal{K},S_i)$. Then the sets $F_2(\mathcal{K}_1),F_2(\mathcal{K}_2),\dots,F_2(\mathcal{K}_r)$ are pairwise disjoint.

\item For each $i$, the edges of $S_i$ form a connected graph. With some abuse of notation we will also denote this graph by $S_i$. 

\item For each $i$, $F_1(\mathcal{K}_i)\cap \supp(f)=S_i$.

\item For each $i$, the vertex set of $\mathcal{K}_i$ is the same as the vertex set of $S_i$.

\item For each $i$, the bipartite graph $H_{\mathcal{K}_i}$ is connected.

\item For each $i$, the subgraph of $H_{\mathcal{K}_i}$ induced by $S_i\cup F_2(\mathcal{K}_i)$ is connected.


\end{itemize}

\end{lemma}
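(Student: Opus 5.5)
The partition will come from an equivalence relation on $S$ generated by sharing triangular faces. Two edges $\tau,\tau'\in S$ are declared adjacent if some $\sigma\in F_2(\mathcal{K})$ contains both of them. The sets $S_1,\dots,S_r$ are then the classes of the transitive closure of this relation.

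The key steps, in order, are these.

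\emph{Disjointness of triangles.} Let $\sigma\in F_2(\mathcal{K}_i)\cap F_2(\mathcal{K}_j)$. Then $\sigma$ contains an edge of $S_i$ and an edge of $S_j$. These two edges are adjacent, so $i=j$.

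\emph{A triangle meeting $\supp(f)$ meets it in exactly two edges.} Take $\sigma\in F_2(\mathcal{K}_i)$. It contains some $\tau\in S_i\subset\supp(f)$. By the cocycle condition on $\sigma$, $\sigma$ cannot contain exactly one edge of $\supp(f)$. So at least two of its edges are in $\supp(f)$.

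\emph{The support edges of such a triangle lie in $S_i$.} Any two edges of $\sigma$ share a vertex. Hence every support edge of $\sigma$ lies in the same connected component of $\suppr(f)$ as $\tau$, which means it lies in $S$. Since these edges all share the face $\sigma$ with $\tau$, they are adjacent to $\tau$ and so lie in $S_i$. This gives $F_1(\mathcal{K}_i)\cap\supp(f)=S_i$: the edges of $\mathcal{K}_i$ are $S_i$ together with boundary edges of triangles in $F_2(\mathcal{K}_i)$, and we have just shown that the support edges among the latter lie in $S_i$.

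\emph{Vertex sets agree.} Every vertex of $\mathcal{K}_i$ is a vertex of some edge of $S_i$ or of some $\sigma\in F_2(\mathcal{K}_i)$. Each such $\sigma$ has at least two edges in $S_i$, and two edges of a triangle cover all three of its vertices. So every vertex of $\sigma$ is a vertex of $S_i$. The reverse inclusion is clear.

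\emph{Connectivity of $S_i$ as a graph.} Two adjacent edges lie in a common triangle, so they share a vertex. Each class is a chain of such adjacencies, so the edges of $S_i$ form a connected graph.

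\emph{Connectivity in $H_{\mathcal{K}_i}$.} The subgraph of $H_{\mathcal{K}_i}$ induced by $S_i\cup F_2(\mathcal{K}_i)$ is connected by construction. An adjacency $\tau\sim\tau'$ is witnessed by a path $\tau,\sigma,\tau'$ in this subgraph. Every $\sigma\in F_2(\mathcal{K}_i)$ is joined to some edge of $S_i$.

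\emph{Connectivity of all of $H_{\mathcal{K}_i}$.} Every remaining edge of $F_1(\mathcal{K}_i)$ is a boundary edge of some $\sigma\in F_2(\mathcal{K}_i)$, so it is attached to the connected part above. Hence $H_{\mathcal{K}_i}$ is connected.

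\emph{Main obstacle.} The only subtle point is the step showing that the support edges of a triangle in $F_2(\mathcal{K}_i)$ lie in $S_i$. It needs both the cocycle condition and the fact that $S$ is a full connected component. Without the latter, a support edge of $\sigma$ could belong to a different component of $\suppr(f)$, which would break the identity $F_1(\mathcal{K}_i)\cap\supp(f)=S_i$. The vertex-set identity also needs care, since it relies on each triangle contributing two edges of $S_i$, which cover all three of its vertices.
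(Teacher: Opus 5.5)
Your proof is correct and is essentially the paper's argument. Your equivalence classes are exactly the sets $S\cap V(C_i)$, where $C_1,\dots,C_r$ are the components meeting $S$ of the subgraph of $H_{\mathcal{K}}$ induced by $S\cup F_2(\mathcal{K})$, which is how the paper defines the partition. Your later verifications follow the paper step by step: the cocycle condition forces a second support edge in each face of $\mathcal{K}_i$, that edge lies in $S_i$ because $S$ is a full component, and the vertex-set and connectivity claims follow from this.

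The only slip is the heading ``exactly two edges''. It should read ``at least two'', since for odd $p$ all three edges of a face may lie in $\supp(f)$. Your argument itself only uses the correct ``at least two'' version.
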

\begin{proof}
 Let $H$ be the subgraph $H_{\mathcal{K}}$ induced by $S\cup F_2(\mathcal{K})$. Let $C_1,C_2,\dots,C_r$ be the connected components of $H$ containing at least one vertex from $S$. Let $S_i=S\cap V(C_i)$. Clearly, $(S_i)_{i=1}^r$ gives a partition of $S$. Note that $F_2(\mathcal{K}_i)=V(C_i)\cap F_2(\mathcal{K})$. Therefore, the sets $F_2(\mathcal{K}_1),F_2(\mathcal{K}_2),\dots,F_2(\mathcal{K}_r)$ are pairwise disjoint. Moreover, it also follows that the subgraph of $H_{\mathcal{K}_i}$ induced by $S_i\cup F_2(\mathcal{K}_i)$ is just $C_i$, so it is connected. The graph $H_{\mathcal{K}_i}$ is obtained from $C_i$ by adding the vertices in $F_1(\mathcal{K}_i)\setminus S_i$. Since each of these new vertices are connected to a vertex in $C_i$, we see that $H_{\mathcal{K}_i}$ is connected. Take any triangular face $\sigma$ of $\mathcal{K}_i$. By definition, $\sigma$ is connected to a vertex in the graph $H_{\mathcal{K}}$ such that this vertex corresponds to an edge $\tau\in S_i$. Since $f$ is a cocycle, it can not happen that in the graph $H_{\mathcal{K}}$, $\tau$ is the only neighbor of $\sigma$  which is in $\supp(f)$. Let $\tau'\neq\tau$ be a neighbor of $\sigma$ which is in $\supp(f)$. Since $\tau$ and $\tau'$ are adjacent edges of the graph $\supp(f)$, we must have $\tau'\in S$. But then $\tau'$ is in the connected component $C_i$, which implies that $\tau'\in S_i$. It follows that $F_1(\mathcal{K}_i)\cap \supp(f)=S_i$. Since $\sigma=\tau\cup \tau'$, it follows that $\sigma$ contained in the vertex set of~$S_i$. Thus, it follows easily that $S_i$ and $\mathcal{K}_i$ have the same vertex set. Also if $\tau_1,\tau_2$ has a common neighbor in the graph $H$, then the edges $\tau_1$ are $\tau_2$ are adjacent. Combining this with the fact that $C_i$ is connected, it follows that $S_i$ is a connected graph. \end{proof}

\begin{lemma}\label{elementarygraph}
 Let $G$ be a connected graph on at least 3 vertices, and let $U$ be a $3$-element subset of the vertices. Then there is a vertex $a\in U$ with the following property: Let $\{b,c\}=U\setminus \{a\}$, then there is a path between $b$ and $c$ that avoids $a$. 
\end{lemma}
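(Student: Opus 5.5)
We argue via a spanning tree. Since $G$ is connected, fix a spanning tree $T$ of $G$. The vertices of $U=\{x,y,z\}$ all lie in $T$, so consider the minimal subtree $T_U$ of $T$ containing $x,y,z$, i.e.\ the union of the three tree paths between pairs of vertices of $U$. In a tree, this union is either a path or a subdivided star with exactly three leaves.

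If $T_U$ is a path, then one vertex of $U$, say $a$, is an endpoint of that path; we take the endpoint (which lies in $U$ because $T_U$ is minimal). The remaining two vertices $b,c$ are joined by the subpath of $T_U$ between them. This subpath does not contain the endpoint $a$, since $a$ is an endpoint and $b,c\neq a$. So we may take this $a$.

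If $T_U$ is a subdivided star, its three leaves are exactly $x,y,z$ (again by minimality), and the center is some other vertex. Then any of them works: for $a=x$, the tree path from $y$ to $c=z$ passes through the center and the two arms to $y$ and $z$, avoiding the arm ending at $x$. In particular it avoids $x$ itself.

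In both cases the required path exists inside $T\subseteq G$, which proves the lemma. More compactly, one can choose $a\in U$ to be a vertex of $U$ that is a leaf of $T_U$. A finite tree with at least two vertices has at least two leaves, and here all leaves lie in $U$. Removing a leaf from a tree leaves a connected tree, which contains $b$ and $c$, so it yields the path. The only point needing care is that the leaves of $T_U$ belong to $U$. This holds because a leaf outside $U$ could be deleted, contradicting the minimality of $T_U$. There is no real obstacle.
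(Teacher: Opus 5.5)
Your proof is correct, but it takes a different route from the paper. The paper works directly with paths in $G$. If every pair in $U$ is joined by a path avoiding the third vertex, there is nothing to prove. Otherwise some pair $a,b$ has all connecting paths passing through $c$. Fixing any $a$--$b$ path $P$, its segment from $c$ to $b$ avoids $a$, since $a$ occurs on $P$ only as an endpoint.

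You instead pass to a spanning tree $T$ and the minimal subtree $T_U$ spanning $U$, and take $a$ to be a leaf of $T_U$. This requires the auxiliary facts that the leaves of $T_U$ lie in $U$ and that deleting a leaf from a tree leaves it connected; you justify both correctly.

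The two arguments share the same idea: choose $a$ to be an ``end'' of a minimal connected structure through $U$. In your path case, the argument is exactly the truncation of a path through all three vertices of $U$. The paper's version is shorter and needs no spanning tree at all. Your version makes it visible that at least two choices of $a$ always work, via the two leaves of $T_U$. The paper's argument yields this too, since both endpoints $a$ and $b$ of $P$ work. Neither refinement is needed for the application in the proof of Lemma~\ref{ordense}.
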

\begin{proof}
 If for all pair of vertices in $U$ there is a path between them that avoids the third, then the statement is clear, since we can choose any element of $U$ as $a$. Thus, assume that there are two vertices $a$ and $b$ such that all the paths between $a$ and $b$ goes through $c$. Since $G$ is connected, we have at least one path $P$ from $a$ to $b$ goes through $c$. Consider the segment of $P$ that goes from $b$ to $c$, this path shows that $a$ satisfies the requirements of the lemma.
\end{proof}


\begin{lemma}\label{ordense}
Let $\mathcal{K}$ be a $2$-dimensional simplicial complex, and $f\in Z^1(\mathcal{K},\mathbb{F}_p)$. Assume that $\supp(f)$ is a (non-empty) connected graph $S$ on $k$ vertices. Then at least one of the following is true
\begin{enumerate}[(a)]
 \item\label{case1} There is non-zero $f'\in Z^1(\mathcal{K},\mathbb{R})$ such that $\supp(f')\subseteq \supp(f)$.
 \item\label{case2} $|F_2(\mathcal{K})|\ge k+1$.
 \item\label{case3} $p=2$, $S$ is a NUG and $|F_2(\mathcal{K})|\ge k$.
\end{enumerate}
\end{lemma}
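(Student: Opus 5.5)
The plan is to reduce to a counting situation and then handle the surviving cases with the lifting lemmas already proved. First, we may replace $\mathcal{K}$ by $\mathcal{N}(\mathcal{K},S)$. By Lemma~\ref{lemmaextend}, a real cocycle of $\mathcal{N}(\mathcal{K},S)$ supported in $S$ extends by zero to one of $\mathcal{K}$. Also $|F_2(\mathcal{N}(\mathcal{K},S))|\le |F_2(\mathcal{K})|$, so conclusions (a)--(c) for the neighborhood imply them for $\mathcal{K}$. Next, if $|S|>|F_2(\mathcal{K})|$ (counting edges of $S$), Lemma~\ref{lemmanontrivialsol} with $F=S$ gives (a). So we may assume $|F_2(\mathcal{K})|\ge |S|\ge k-1$, the last inequality because $S$ is connected. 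If $|S|\ge k+1$ we are in case (b), so only $|S|\in\{k-1,k\}$ remains.

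If $|S|=k-1$, then $S$ is a tree, hence bipartite, and Lemma~\ref{lemmabipartite} gives (a). If $|S|=k$, then $S$ is unicyclic. If its cycle is even, $S$ is bipartite and again (a) holds. So assume the cycle is odd; if $|F_2(\mathcal{K})|\ge k+1$ we have (b), so assume $|F_2(\mathcal{K})|=k$. We split by $p$ and by the cycle length.
\begin{itemize}
\item $p$ odd, cycle of length at least $5$: $S$ contains no triangle, so Lemma~\ref{trianglesinsup} gives (a).
\item $p$ odd, cycle a triangle $\{x,y,z\}$ that is not a face of $\mathcal{K}$: Lemma~\ref{trianglesinsup} again gives (a).
\item $p=2$: the triangle of $S$ can never be a face, since $1+1+1\neq 0$ in $\mathbb{F}_2$. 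If $S$ is a NUG we are in (c). Otherwise the cycle is a triangle with a vertex, say $x$, of degree $2$. Apply Lemma~\ref{bipartitegen} with $C=\{x\}$, $y\in A$, $z\in B$, and the trees hanging off $y$ and $z$ properly $2$-coloured with colours $A,B$. No edge of $S$ lies inside a part. A forbidden rainbow face must have its vertex in $C$ incident to two support edges, so it must be $\{x,y,z\}$, which is not a face. Hence (a).
\end{itemize}

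The remaining case, and the main obstacle, is $p$ odd with the triangle $\{x,y,z\}$ of $S$ being a face of $\mathcal{K}$ and $|F_2(\mathcal{K})|=k=|S|$. My plan is to delete that face. Let $\mathcal{K}'$ be $\mathcal{K}$ without $\{x,y,z\}$. The real cocycles of $\mathcal{K}$ supported in $S$ are exactly the real cocycles of $\mathcal{K}'$ supported in $S$ that also satisfy the single linear equation $g(x,y)+g(y,z)+g(z,x)=0$. So it suffices to exhibit two linearly independent real cocycles of $\mathcal{K}'$ supported in $S$, since a suitable nonzero combination of them satisfies the extra equation. Two candidates are available.
\begin{itemize}
\item The lift $\ell_p\circ f$ from the proof of Lemma~\ref{trianglesinsup}: it is a cocycle on $\mathcal{K}'$, because the only face with three support edges has been removed.
\item The $\pm1$ cocycles $g_x,g_y,g_z$ from Lemma~\ref{bipartitegen} applied to $\mathcal{K}'$: here $C$ is the single triangle vertex $x$, $y$ or $z$, and the hanging trees are $2$-coloured as above. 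This works because a rainbow face with two support edges at the $C$-vertex must be the removed triangle.
\end{itemize}
Each of these has support exactly $S$.

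I expect the delicate point to be showing that two of these are not proportional. The $g$'s are all $\pm1$-valued on $S$ with different sign patterns on the triangle. I would first check whether two of $g_x,g_y,g_z$ are already independent, comparing their values on the triangle edges and on the trees. Failing that, I would compare $\ell_p\circ f$ with a $g$: if $\ell_p\circ f$ were proportional to every $g$, then all $g$'s would be proportional to each other and $|f|$ would be constant on $S$. That should contradict the cocycle condition of $f$ on the triangle face for $p$ odd, since three nonzero values of equal absolute value $\le\frac{p-1}{2}$ cannot sum to $0$ modulo $p$ with the forced signs. Lemma~\ref{elementarygraph} may be needed to pick the right vertex of the triangle to place in $C$.
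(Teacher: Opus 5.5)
Your reductions before the main case agree with the paper's: Lemma~\ref{lemmanontrivialsol}, bipartiteness, Lemma~\ref{trianglesinsup}, and for $p=2$ Lemma~\ref{bipartitegen} with $C$ a degree-$2$ triangle vertex. The gap is in the main case ($p$ odd, $\sigma=\{x,y,z\}\in\mathcal{K}$). You justify that $g_x$ is a cocycle of $\mathcal{K}'$ by saying ``a rainbow face with two support edges at the $C$-vertex must be the removed triangle''. This is false as soon as $x$ has a neighbour $w\notin\{y,z\}$ in $S$. The cocycle condition permits $\{x,y,w\}\in\mathcal{K}'$: since $yw\notin S$, it only forces $f(x,y)=f(x,w)$. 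If the subtree of $S-x$ through $w$ is coloured with $w\in B$, this face is rainbow with both edges at $x$ in the support, and indeed $g_x(x,y)+g_x(y,w)+g_x(w,x)=-1+0-1=-2$. This can happen at all three triangle vertices at once, even with $|F_2(\mathcal{K})|=|S|=k$. Take $p=3$, $f=1$ on $(x,y),(y,z),(z,x),(x,w_x),(y,w_y),(z,w_z)$, and faces $\sigma,\{x,y,w_x\},\{y,z,w_y\},\{z,x,w_z\}$. Then pad with two new leaves at each of $w_x,w_y$ and the three wedge faces there, with $f$ constant on the edges oriented away from $w_x$ (resp.\ $w_y$); this gives $k=|S|=|F_2(\mathcal{K})|=10$. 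So choosing the vertex, via Lemma~\ref{elementarygraph} or otherwise, does not help. With an unspecified colouring you may have no valid $g$ at all, and then only $\ell_p\circ f$ is left.

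The missing idea is a colouring chosen using $\sigma\in\mathcal{K}$, a hypothesis your validity argument never invokes even though it must. Let $L_x$ be the graph on $N_S(x)$ in which $n,n'$ are adjacent when $\{x,n,n'\}\in\mathcal{K}'$. Across an edge of $L_x$ we have $nn'\notin S$, hence $f(x,n)=f(x,n')$. So if $y$ and $z$ lay in one component of $L_x$, then $f(x,y)=f(x,z)$, and the cocycle condition on $\sigma$ would give $f(y,z)=0$. Every component of $S-x$ other than the one containing $y,z$ meets $N_S(x)$ in exactly one vertex, and that one meets it only in $y,z$. Hence you can colour these components so that each component of $L_x$ is monochromatic, with $y\in A$ and $z\in B$; then Lemma~\ref{bipartitegen} genuinely applies. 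This is a local form of the paper's argument. The paper instead splits $S$ by Lemma~\ref{lemmaconnectedcomponents} applied to $\mathcal{K}'$ and glues bipartite lifts of two different pieces when the triangle edges are separated. Otherwise it propagates $f$ along a path, supplied by Lemma~\ref{elementarygraph}, in the incidence graph of $S_i$ and $F_2(\mathcal{K}_i)$, contradicting $\sigma\in\mathcal{K}$. Once one valid $g$ exists, your endgame closes more simply than you planned, because $g$ sums to $\pm1$ on the oriented $\sigma$. Either $\ell_p\circ f$ and $g$ are independent and your one-equation count gives (a), or $\ell_p\circ f=cg$ with $1\le|c|\le\frac{p-1}{2}$, in which case $f$ sums to $\pm c\not\equiv0$ on $\sigma$, which is impossible. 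It is this sign pattern that does the work; equal absolute values alone would not, since $1+1+1\equiv0$ for $p=3$.
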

\begin{proof}

For the sake of contradiction, assume that \eqref{case1}, \eqref{case2} and \eqref{case3} all fail.

By Lemma~\ref{lemmanontrivialsol}, since \eqref{case1} does not hold, $|\supp(f)|\le |F_2(\mathcal{K})|$. If \eqref{case2} also fails, this implies that $|\supp (f)|\le k$. Combining this with the fact that $S$ is a connected graph on $k$ vertices, we see that $\supp(f)$ contains at most one cycle. Moreover, $\supp(f)$ must contain an odd cycle, because otherwise $\supp(f)$ would be bipartite, so we could use Lemma~\ref{lemmabipartite} to show that \eqref{case1} holds. Thus, $\supp(f)$ must be an unicyclic graph, where the unique cycle has odd length. Using our earlier observations, we have $|F_2(\mathcal{K})|\ge |\supp(f)|=k$. We claim that the unique cycle must be a triangle $\sigma$. If $p=2$, then it follows from the fact that \eqref{case3} also fails. In the case of odd $p$, if the length of the unique cycle was at least $5$, then $\supp(f)$ would be triangle free, so \eqref{case1} would occur by Lemma~\ref{trianglesinsup}.

First assume that $\sigma\in \mathcal{K}$, where $\sigma$ the unique triangle in $S$. 

Let $\mathcal{K}'=\mathcal{K}\setminus\{\sigma\}$. Obviously, $f\in Z^1(\mathcal{K}',\mathbb{F}_p)$. Let us apply Lemma~\ref{lemmaconnectedcomponents} to the complex~$\mathcal{K}'$. Let $S=(S_i)_{i=1}^r$ be the resulting partition of $S$, and let $\mathcal{K}_i=\mathcal{N}(\mathcal{K}',S_i)$. Let $f_i$ be the restriction of $f$ to $F_1(\mathcal{K}_i)$. Clearly, $f_i\in Z^1(\mathcal{K}_i,\mathbb{F}_p)$ and $\supp(f_i)=S_i$.

 First, assume that there is an $i$ such that the intersection of $S_i$ and the boundary of $\sigma$ contains a single edge $\{u,v\}$. Let $w$ the third vertex of the triangle $\sigma$. In this case, $r$ can not be $1$. Thus, there is a $j\in\{1,\dots,r\}$ such that $i\neq j$. It is clear that $S_j=\suppr(f_j)$ is a tree. In particular, $\supp(f_j)$ is bipartite. So by combining Lemma~\ref{lemmabipartite} and Lemma~\ref{lemmaextend}, we can find a $f_j'\in Z^1(\mathcal{K}',\mathbb{R})$ such that $\supp(f_j')= \supp(f_j)$. Let $a=f_j'(v,w)+f_j'(w,u)$. Since $\supp(f_i)$ is also bipartite, by combining Lemma~\ref{lemmabipartite} and Lemma~\ref{lemmaextend}, we can find a $f_i'\in Z^1(\mathcal{K}',\mathbb{R})$ such that $\supp(f_i')= \supp(f_i)$. By multiplying $f_i'$ with a scalar, we can get an $f_i''\in Z^1(\mathcal{K}',\mathbb{R})$ such that $\supp(f_i'')\subset \supp(f_i)$ and $f_i''(u,v)=a$. We set $f'=f_j'-f_i''$. Then $f'$ is a nonzero element $Z^1(\mathcal{K}',\mathbb{R})$. Moreover,
\[f'(u,v)+f'(v,w)+f'(w,u)=-f_i''(u,v)+f_j'(v,w)+f_j'(w,u)=-a+a=0.\]
Thus, $f'$ is in fact a non-zero element of $Z^1(\mathcal{K},\mathbb{R})$. Thus, \eqref{case1} holds, which is a contradiction.

Therefore the only option is that all the edges of $\sigma=\{u,v,w\}$ are contained in some $S_i$. By Lemma~\ref{lemmaconnectedcomponents}, the subgraph $H$ of $H_{\mathcal{K}_i}$ induced by $S_i\cup F_2(\mathcal{K}_i)$ is connected. By Lemma~\ref{elementarygraph}, without the loss of generality, we may assume that there is a path in $H$, from $\{u,v\}$ to $\{v,w\}$ that avoids $\{w,u\}$.

Consider the graph $S'$ obtained from $S$ by deleting the edge $\{w,u\}$. Then $S'$ is bipartite. Let $A$ and $B$ be the two color classes such that $u\in A$ and $v\in B$. Clearly we must have $w\in A$. Since there is a path in $H$, from $\{u,v\}$ to $\{v,w\}$ that avoids $\{w,u\}$, there is a sequence $(u,v)=(a_0,b_0),(a_1,b_1),\dots,(a_\ell,b_\ell)=(w,v)$ of oriented edges of $S'$ such that for all~$i$, $\{a_i,b_i\}$ and $\{a_{i+1},b_{i+1}\}$ have a common neighbor in $H$. Here we choose the orientation of the edges such that $a_i\in A$ and $b_i\in B$ for all $i$. Let $0\le i<\ell$. The fact that $\{a_i,b_i\}$ and $\{a_{i+1},b_{i+1}\}$ have a common neighbor in $H$ implies that either
\begin{itemize}
 \item $a_i=a_{i+1}$, $b_i\neq b_{i+1}$ and $\{a_i,b_i,b_{i+1}\}\in\mathcal{K}'$, or
 \item $a_i\neq a_{i+1}$, $b_i= b_{i+1}$ and $\{a_i,a_{i+1},b_{i}\}\in\mathcal{K}'$.
\end{itemize}

In the first case, since $f$ is a cocycle, we have
\[0=f(a_i,b_i)+f(b_i,b_{i+1})+f(b_{i+1},a_i)=f(a_i,b_i)+0-f(a_{i+1},b_{i+1}).\]
Note that here $f(b_i,b_{i+1})=0$ because the only triangle in the graph $S$ is $\sigma=\{u,v,w\}$ and $\sigma\notin\mathcal{K}'$, so since $\{a_i,b_i\}\in S$ and $\{b_{i+1},a_i\}\in S$, we must have $\{b_i,b_{i+1}\}\notin S$. 
Thus, $f(a_{i+1},b_{i+1})=f(a_i,b_i)$. By a similar proof this is also true in the second case. Therefore, it follows that $f(u,v)=f(w,v)$.

 Thus,
\[f(u,v)+f(v,w)+f(w,u)=f(w,u)\neq 0,\]
which contradicts the facts that $\{u,v,w\}\in\mathcal{K}$ and $f\in Z^1(\mathcal{K},\mathbb{F}_p)$.

Finally, consider the case when $\sigma\notin\mathcal{K}$. If $p$ is odd, then Lemma~\ref{trianglesinsup} shows that \eqref{case1} holds, which is a contradiction. If $p=2$, since \eqref{case3} fails, we have a vertex $v$ of the triangle $\sigma$ such that the degree of $v$ in the graph $\supp(f)$ is $2$. If we delete the vertex $v$ from $\supp(f)$, we get a bipartite graph. Let $A$ and $B$ the two color classes of this graph, and let $C=\{v\}$. Then Lemma~\ref{bipartitegen} can be applied to show that \eqref{case1} holds, which is a contradiction. 
\end{proof}

We say that a 2-dimensional simplicial complex $\mathcal{C}$ is $k$-dense, if it has $k$ vertices and at least $k+1$ triangular faces and $H_{\mathcal{C}}$ is connected.

We say that a 2-dimensional simplicial complex $\mathcal{C}$ is $k$-semidense, if $\mathcal{C}$ has $k$ vertices, and for some $r>1$, $\mathcal{C}$ can be obtained as $\mathcal{C}=\cup_{i=1}^r \mathcal{C}_i$ for some simplicial complexes $\mathcal{C}_i$ such that  $H_{\mathcal{C}_i}$ is connected, $F_2(\mathcal{C}_1),F_2(\mathcal{C}_2),\dots,F_2(\mathcal{C}_r)$ are pairwise disjoint, moreover, $\mathcal{C}$ has at least $k+r-1$ triangular faces.

\begin{lemma}\label{cobwithiso}
 Let $K_n$ be complete graph on the vertex set $V$, and let $g\in B^1(K_n,\mathbb{R})$ such that the graph $\supp(g)$ contains an isolated vertex $v$. Then $g=0$.
\end{lemma}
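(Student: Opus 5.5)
Since $g\in B^1(K_n,\mathbb{R})$, we can write $g=\delta h$ for some $h\in C^0(K_n,\mathbb{R})$, that is,
\[g(u,w)=h(w)-h(u)\quad\text{for all distinct }u,w\in V.\]
The plan is to use the isolated vertex $v$ of $\supp(g)$ to show that $h$ is constant, which immediately gives $g=0$.

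First, since $v$ is an isolated vertex of the graph $\supp(g)$, no edge of $\supp(g)$ contains $v$. Hence $g(v,w)=0$ for every $w\in V\setminus\{v\}$. Here we use that $K_n$ is complete, so every pair $\{v,w\}$ is an edge on which $g$ is defined. This gives $h(w)=h(v)$ for all $w\neq v$, so $h$ is constant on $V$.

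Consequently $g(u,w)=h(w)-h(u)=0$ for every edge $\{u,w\}$, so $g=0$.

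There is no real obstacle in this argument. The only point needing care is that completeness of $K_n$ is what lets us compare every vertex directly with $v$. In a non-complete complex, the same conclusion would only hold on the connected component of $v$ in the $1$-skeleton.
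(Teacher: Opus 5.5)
Your proof is correct and follows the paper's argument. The paper also writes $g$ as the coboundary of a function on the vertices. It then uses $g(v,w)=0$ for every $w\neq v$, which requires the complete $1$-skeleton, to conclude that this function is constant, and hence $g=0$.
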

\begin{proof}
 Let $g$ be the coboundary of a function $f:[n]\to \mathbb{R}$. Since $g(v,u)=0$ for all $u\neq v$, it follows that $f(u)=f(v)$ for all $u\neq v$. Thus, $f$ is constant, and therefore $g=0$. 
\end{proof}

\begin{lemma}\label{oddpdense}
Let $p$ be an odd prime. Let $\mathcal{T}_n$ be a hypertree on $n$ vertices, and let $f\in Z^1(\mathcal{T}_n,\mathbb{F}_p)$ such that $\supp(f)$ has a connected component $S$ with $m$ vertices, where $1<m<n$. Then $\mathcal{T}_n$ has a $k$-dense subcomplex for some $k\le m$.\end{lemma}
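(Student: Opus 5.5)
Restrict $f$ to the component $S$: let $g\in C^1(\mathcal{T}_n,\mathbb{F}_p)$ agree with $f$ on the edges of $S$ and vanish elsewhere. Any triangle of $\mathcal{T}_n$ that contains an edge of $S$ and an edge of $\supp(f)$ has both edges in $S$, since they share a vertex. So $g\in Z^1(\mathcal{T}_n,\mathbb{F}_p)$ and $\supp(g)=S$. We then apply Lemma~\ref{lemmaconnectedcomponents} to $g$ (with $\mathcal{K}=\mathcal{T}_n$). This gives the partition $(S_i)_{i=1}^r$ of $S$ and the complexes $\mathcal{K}_i=\mathcal{N}(\mathcal{T}_n,S_i)$, with the following properties: the $F_2(\mathcal{K}_i)$ are pairwise disjoint, each $H_{\mathcal{K}_i}$ is connected, $\mathcal{K}_i$ has the same vertex set as $S_i$ (say $k_i\le m$ vertices), and the restriction $g_i$ of $g$ to $\mathcal{K}_i$ lies in $Z^1(\mathcal{K}_i,\mathbb{F}_p)$ with $\supp(g_i)=S_i$, a connected graph on $k_i$ vertices. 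We now apply Lemma~\ref{ordense} to each $(\mathcal{K}_i,g_i)$. Since $p$ is odd, case~\eqref{case3} is excluded. If case~\eqref{case2} occurs for some $i$, then $\mathcal{K}_i$ has $k_i$ vertices, at least $k_i+1$ triangles and connected $H_{\mathcal{K}_i}$, so it is a $k_i$-dense subcomplex with $k_i\le m$, and we are done.

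It therefore remains to rule out that case~\eqref{case1} holds for every $i$. Suppose, for instance, that case~\eqref{case1} holds for $i=1$. Then there is a nonzero $h\in Z^1(\mathcal{K}_1,\mathbb{R})$ with $\supp(h)\subseteq S_1$. By Lemma~\ref{lemmaextend}, extending $h$ by zeros gives a nonzero $h'\in Z^1(\mathcal{T}_n,\mathbb{R})$ supported on $S_1\subseteq S$. Since $\mathcal{T}_n$ is a hypertree, $H^1(\mathcal{T}_n,\mathbb{R})=0$, so $h'\in B^1(\mathcal{T}_n,\mathbb{R})$. This is a coboundary on the complete $1$-skeleton $K_n$. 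Because $m<n$, some vertex $v\notin V(S)$ exists, and $v$ is isolated in $\supp(h')$. Lemma~\ref{cobwithiso} then forces $h'=0$, a contradiction. Hence case~\eqref{case1} fails for every $i$, in particular for $i=1$, so case~\eqref{case2} holds for $\mathcal{K}_1$, and $\mathcal{K}_1$ is the required $k$-dense subcomplex with $k=k_1\le m$.

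The substantive work is already contained in Lemma~\ref{ordense}; the main point to verify here is that its hypotheses hold for each piece $(\mathcal{K}_i,g_i)$. This needs $\supp(g_i)$ to be exactly $S_i$, which is connected, and the vertex count of $S_i$ to equal that of $\mathcal{K}_i$. Both facts are supplied by Lemma~\ref{lemmaconnectedcomponents}, through the bullets $F_1(\mathcal{K}_i)\cap\supp(f)=S_i$ and equality of vertex sets. The hypothesis $1<m$ ensures that $S$ has an edge, so $r\ge1$, and $m<n$ is used only to produce the isolated vertex for Lemma~\ref{cobwithiso}.
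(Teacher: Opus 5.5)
Your proof is correct and follows essentially the same route as the paper. You apply Lemma~\ref{lemmaconnectedcomponents}, then Lemma~\ref{ordense} to $\mathcal{K}_1$ (case~\eqref{case3} is excluded since $p$ is odd), and you rule out case~\eqref{case1} via Lemma~\ref{lemmaextend}, the vanishing of $H^1(\mathcal{T}_n,\mathbb{R})$, and Lemma~\ref{cobwithiso}; your preliminary restriction of $f$ to $S$ is harmless but unnecessary, because Lemma~\ref{lemmaconnectedcomponents} applied to $f$ itself already gives $F_1(\mathcal{K}_i)\cap\supp(f)=S_i$.
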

\begin{proof}
 Applying Lemma~\ref{lemmaconnectedcomponents} with the choice of $\mathcal{K}=\mathcal{T}_n$, we obtain a partition $(S_i)_{i=1}^r$ of $S$. Let $\mathcal{K}_1=\mathcal{N}(\mathcal{T}_n,S_1)$, and let $f_1\in Z^1(\mathcal{K}_1,\mathbb{F}_p)$ be the restriction of $f$ to $F_1(\mathcal{K}_1)$. 
By Lemma~\ref{lemmaconnectedcomponents}, $H_{\mathcal{K}_1}$ is connected and $S_1=\suppr(f_1)$. Let $k$ be the size of the vertex set of $S_1$. By Lemma~\ref{ordense} one of the following is true
 \begin{itemize}
 \item There is non-zero $f'\in Z^1(\mathcal{K}_1,\mathbb{R})$ such that $\supp(f_1')\subseteq \supp(f_1)$.
 \item $|F_2(\mathcal{K}_1)|\ge k+1$.
 
 \end{itemize}

 If the second statement is true, the $\mathcal{K}_1$ is a $k$-dense complex. Thus, it is enough to exclude the possibility that the first statement is true. If there was a non-zero $f_1'\in Z^1(\mathcal{K}_1,\mathbb{R})$ such that $\supp(f_1')\subseteq \supp(f_1)$, then Lemma~\ref{lemmaextend} would give us a non-zero $g\in Z^1(\mathcal{T}_n,\mathbb{R})$. Since $H^1(\mathcal{T}_n,\mathbb{R})\cong H_1(\mathcal{T}_n,\mathbb{R})$ is trivial, this would imply that $g\in B^1(\mathcal{T}_n,\mathbb{R})$. Since we assumed that $m<n$, this would contradict Lemma~\ref{cobwithiso}. 
 \end{proof}

\begin{lemma}\label{p2dense}
 Let $\mathcal{T}_n$ be a hypertree on $n$ vertices, and let $f\in Z^1(\mathcal{T}_n,\mathbb{F}_2)$. Assume that $\supp(f)$ has a connected component $S$ with $m$ vertices such that $S$ is not a NUG and $1<m<n$. Then $\mathcal{T}_n$ has a $k$-dense or a $k$-semidense subcomplex for some $k\le m$.\end{lemma}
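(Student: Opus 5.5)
We follow the proof of Lemma~\ref{oddpdense}, with extra care for the case $p=2$, where alternative~\eqref{case3} of Lemma~\ref{ordense} may occur. First we apply Lemma~\ref{lemmaconnectedcomponents} with $\mathcal{K}=\mathcal{T}_n$. This gives a partition $(S_i)_{i=1}^r$ of $S$ and complexes $\mathcal{K}_i=\mathcal{N}(\mathcal{T}_n,S_i)$. For each $i$, let $f_i$ be the restriction of $f$ to $F_1(\mathcal{K}_i)$ and let $k_i$ be the number of vertices of $S_i$. Then $f_i\in Z^1(\mathcal{K}_i,\mathbb{F}_2)$, $\suppr(f_i)=S_i$ is connected, and $H_{\mathcal{K}_i}$ is connected.

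Next we apply Lemma~\ref{ordense} to each pair $(\mathcal{K}_i,f_i)$. Alternative~\eqref{case1} never occurs, by the argument of Lemma~\ref{oddpdense}. Indeed, Lemma~\ref{lemmaextend} would extend a nonzero real cocycle supported inside $S_i$ to a nonzero element of $Z^1(\mathcal{T}_n,\mathbb{R})=B^1(\mathcal{T}_n,\mathbb{R})$. Since $m<n$, its support has an isolated vertex, and this contradicts Lemma~\ref{cobwithiso}. Hence for every $i$ either $|F_2(\mathcal{K}_i)|\ge k_i+1$, or $S_i$ is a NUG and $|F_2(\mathcal{K}_i)|\ge k_i$. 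If some $i$ is of the first kind, then $\mathcal{K}_i$ is $k_i$-dense with $k_i\le m$, and we are done. So assume every $S_i$ is a NUG with $|F_2(\mathcal{K}_i)|\ge k_i$.

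Suppose first that $r=1$. Then $S=S_1$ is a NUG, contradicting the hypothesis. So $r\ge 2$, and we take $\mathcal{C}=\bigcup_i\mathcal{K}_i$. By Lemma~\ref{lemmaconnectedcomponents}, its vertex set is the vertex set of $S$, so it has $m$ vertices. The sets $F_2(\mathcal{K}_i)$ are pairwise disjoint and each $H_{\mathcal{K}_i}$ is connected. Therefore $\mathcal{C}$ is $m$-semidense once we show $\sum_i|F_2(\mathcal{K}_i)|\ge m+r-1$. Since this sum is at least $\sum_i k_i$, it suffices to prove
\[\sum_{i=1}^r k_i\ge m+r-1.\]

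This counting step is the crux. The $S_i$ partition the edges of the connected graph $S$, and each $S_i$ is connected and unicyclic, so $|E(S_i)|=k_i$. Thus $|E(S)|=\sum_i k_i$. Consider the bipartite incidence graph between the pieces $S_i$ and the vertices of $S$. It is connected because $S$ is connected. A connected graph has at least as many edges as vertices minus one, so $\sum_i k_i\ge m+r-1$, which is exactly what we need.

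As a sanity check, the union of the pieces has cycle rank $|E(S)|-m+1=\sum_i k_i-m+1\ge r\ge 2$. This is consistent with $S$ not being unicyclic, so the $r\ge2$ case is genuinely forced and the bound is tight in the right way.

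The main obstacle is the reduction to $r\ge 2$ and confirming that the pieces really satisfy the definition of semidense. In particular, we must check that $\mathcal{C}$, as a union of subcomplexes of $\mathcal{T}_n$, is a subcomplex with exactly $m$ vertices. Both facts follow from the vertex-set statement of Lemma~\ref{lemmaconnectedcomponents}.
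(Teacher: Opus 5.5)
Your proof is correct and follows the paper's argument essentially step by step. You decompose $S$ via Lemma~\ref{lemmaconnectedcomponents}, rule out alternative~(a) of Lemma~\ref{ordense} as in Lemma~\ref{oddpdense}, and take a $k_i$-dense $\mathcal{K}_i$ if one is available; otherwise $r\ge 2$ and $\bigcup_i\mathcal{K}_i=\mathcal{N}(\mathcal{T}_n,S)$ is $m$-semidense. The only minor difference is how you justify $\sum_i k_i\ge m+r-1$: you use connectivity of the piece--vertex incidence graph, while the paper uses the fact that $S$ contains $r$ edge-disjoint cycles, one in each NUG $S_i$. These are equivalent counts.
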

\begin{proof}
Applying Lemma~\ref{lemmaconnectedcomponents} with the choice of $\mathcal{K}=\mathcal{T}_n$, we obtain a partition $(S_i)_{i=1}^{r}$ of $S$. Let $\mathcal{K}_i=\mathcal{N}(\mathcal{T}_n,S_i)$, and let $f_i\in Z^1(\mathcal{K}_i,\mathbb{F}_2)$ be the restriction of $f$ to $F_1(\mathcal{K}_i)$.

By Lemma~\ref{lemmaconnectedcomponents}, $H_{\mathcal{K}_i}$ is connected, and $S_i=\suppr(f_i)$ is a connected graph on $ k_i$ vertices, where $k_i\le m$. Moreover, $F_2(\mathcal{K}_1),\dots,F_2(\mathcal{K}_r)$ are pairwise disjoint. By Lemma~\ref{ordense}, given an~$i$, one of the following is true
 \begin{itemize}
 \item There is non-zero $f'\in Z^1(\mathcal{K}_i,\mathbb{R})$ such that $\supp(f_i')\subseteq \supp(f_i)$.
 \item $|F_2(\mathcal{K}_i)|\ge k_i+1$.
 \item $S_i$ is a NUG and $|F_2(\mathcal{K}_i)|\ge k_i$.
 \end{itemize}

 As we have seen in the proof of Lemma~\ref{oddpdense}, the first statement can not be true. If the second statement is true for some $i$, then we have found a $k_i$-dense subcomplex of $\mathcal{T}_n$. Thus, it is enough to consider the case when for all $i$, $S_i$ is a NUG and $|F_2(\mathcal{K}_i)|\ge k_i$. Since we assumed that $S$ is not a NUG, we see that $r>1$. Therefore, $S$ is a connected graph on $m$ vertices such that it contains $r$ pairwise disjoint cycles. Thus, on the one hand, the number of edges of $S$ is at least $m+r-1$. On the other hand, the number of edges of $S$ is $\sum_{i=1}^{r} |S_i|=\sum_{i=1}^r k_i$. Observe that the number of triangular faces of $\mathcal{N}(\mathcal{T}_n,S)=\cup_{i=1}^r \mathcal{K}_i$ is $\sum_{i=1}^r |F_2(\mathcal{K}_i)|\ge \sum_{i=1}^r k_i\ge m+r-1$. Thus, $\mathcal{N}(\mathcal{T}_n,S)$ is $m$-semidense.
\end{proof}

\subsection{Counting dense subcomplexes}\label{SecDense}

\begin{lemma}\label{subcomplexcount}
Let $V$ be a $k$-element vertex set, and let $m$ be a positive integer. Then there are at most $4^{4m}3^{3m}k^{m+2}$ 
 labeled $2$-dimensional complexes $\mathcal{K}$ on the vertex set $V$ such that $|F_2(\mathcal{K})|=m$ and $H_{\mathcal{K}}$ is connected. 
\end{lemma}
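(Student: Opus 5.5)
We count labeled $2$-dimensional complexes $\mathcal{K}$ on $V$ with $|F_2(\mathcal{K})|=m$ and $H_{\mathcal{K}}$ connected by encoding each one as an exploration of $H_{\mathcal{K}}$. First we dispose of the edges. If $m\ge 1$, connectivity of $H_{\mathcal{K}}$ forces every edge of $\mathcal{K}$ to lie in some triangle, so $F_1(\mathcal{K})$ is the set of sides of triangles in $F_2(\mathcal{K})$. The vertices of $\mathcal{K}$ are the fixed set $V$. So it suffices to count the possible triangle sets $F_2(\mathcal{K})$, that is, collections of $m$ triples from $V$ whose incidence graph (triangles adjacent when they share an edge) is connected. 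This incidence graph is exactly the graph on $F_2(\mathcal{K})$ induced by distance-$2$ adjacency in $H_{\mathcal{K}}$, which is connected when $H_{\mathcal{K}}$ is.

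Given such a connected triangle set, fix a spanning tree of the triangle-adjacency graph. Order the triangles $\sigma_1,\dots,\sigma_m$ by a depth-first traversal of this tree starting from a root. Encode $\mathcal{K}$ as follows. The first triangle $\sigma_1$ costs at most $k^3$ choices. Each subsequent triangle $\sigma_j$ is attached to an earlier triangle $\sigma_{i}$ along a common edge, so it is specified by three pieces of data. One is the choice of parent $\sigma_i$. Another is which of the $3$ edges of $\sigma_i$ is shared. The last is the new vertex, at most $k$ choices.

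The parent choice must not cost a factor $m$ per step, which would give $m!$-type growth. To avoid this, encode the depth-first traversal by its walk shape: a sequence of at most $2m$ up/down moves, giving at most $4^{m}$ possibilities. This determines the tree structure, and hence each triangle's parent.

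The total is then at most $k^3\cdot 4^m\cdot (3k)^{m-1}$. This is bounded by $4^{4m}3^{3m}k^{m+2}$, using $k^3\cdot k^{m-1}=k^{m+2}$, $3^{m-1}\le 3^{3m}$ and $4^m\le 4^{4m}$. The case $m=0$ is trivial: there $H_{\mathcal{K}}$ connected forces at most one edge, giving at most $k^2\le k^{2}$ complexes.

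The only delicate step is the parent encoding. The DFS/Dyck-path trick handles it, and the stated bound is generous enough to absorb any slack, for example from encoding the walk with $3$-letter steps or choosing among up to $3$ edges.
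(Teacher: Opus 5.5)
Your proof is correct and is essentially the paper's argument. Both encode $\mathcal{K}$ by a rooted spanning tree whose shape is counted via plane trees/Catalan numbers, with $O(1)$ or $O(k)$ label choices per tree vertex. The only difference is that you explore the triangle-adjacency graph instead of the incidence graph $H_{\mathcal{K}}$, after correctly noting that $F_1(\mathcal{K})$ is determined by $F_2(\mathcal{K})$ when $m\ge 1$; this yields the sharper bound $4^m3^{m-1}k^{m+2}$.
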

\begin{proof}
 Let $\mathcal{K}$ be a $2$-dimensional complex on the vertex set $V$ such that $|F_2(\mathcal{K})|=m$ and $H_{\mathcal{K}}$ is connected. Clearly, $|F_1(\mathcal{K})|\le 3|F_2(\mathcal{K})|=3m$. Since $H_{\mathcal{K}}$ is connected, we can take a spanning tree of $H_{\mathcal{K}}$. By choosing an element of $F_1(\mathcal{K})$ as a root, we can turn this into a rooted spanning tree on at most $4m$ vertices. We will count the simplicial complexes as above based on the isomorphism class of this rooted tree. Note that the number of isomorphism classes of rooted trees on at most $4m$ vertices is at most $4^{4m}$. (Indeed, the number of isomorphism classes of rooted trees on exactly $\ell$ vertices can be bounded by the number of plane trees on $\ell$ vertices, which is given by the Catalan number $C_{\ell-1}\le 4^{\ell-1}$, see \cite[Theorem 1.5.1]{stanley2015catalan}. Thus, the number of isomorphism classes of rooted trees on at most $4m$ vertices is at most $\sum_{\ell=1}^{4m} 4^{\ell-1}<4^{4m}$.)
 
 Let $(T,o)$ be a rooted tree. Let $A$ be the set of vertices of $T$ which are at an even distance from the root $o$, and let $B$ be the set of vertices of $T$ which are at odd distance from the root $o$. Assume that $|B|=m$ and $|A|\le 3m$. Let $\mathfrak{C}$ be the set of $2$-dimensional simplicial complexes~$\mathcal{K}$ on the vertex set $V$ such that $H_{\mathcal{K}}$ has a rooted spanning tree which is isomorphic to $(T,o)$ (as a rooted graph) and the root is in $F_1(\mathcal{K})$. Let $\mathfrak{C}'$ be the set of functions $f:A\cup B\to {{V}\choose{2}}\cup {{V}\choose{3}}$ such that 
 \begin{itemize}
 \item $f(a)\in {{V}\choose{2}}$ for all $a\in A$,
 \item $f(b)\in {{V}\choose{3}}$ for all $b\in B$,
 \item $f(a)\subset f(b)$ for all $a\in A$ and $b\in B$ such that $ab$ is an edge of $T$.
 \end{itemize}
 Given any complex $\mathcal{K}\in \mathfrak{C}$ there is a surjective map $f:A\cup B\to F_1(\mathcal{K})\cup F_2(\mathcal{K})$ which induces an isomorphism between $T$ and a spanning tree of $H_\mathcal{K}$ such that $f(o)\in F_1(\mathcal{K})$. Clearly this map is in $\mathfrak{C}'$. Moreover, $\mathcal{K}$ can be recovered from $f$. Thus, $|\mathfrak{C}|\le |\mathfrak{C}'|$.

 Next, we describe a way we can generate all the elements of $\mathfrak{C}'$. Since $T$ is a rooted tree, we can define the parent of each vertex of $T$ other than the root $o$. Let $o=u_1,u_2,\dots,u_{|A|+|B|}$ be a list of all the vertices of $T$ such that for any vertex $u\neq o$ of $T$, the parent of $u$ comes before $u$. For example, one can list the vertices in the order a depth first search visits them. We define the values of $f(u_1),f(u_2),\dots,f(u_{|A|+|B|})$ one by one. For $f(u_1)=f(o)$, we have ${{k}\choose{2}}$ choices. Assume that for $i>1$, we have already chosen $f(u_1),f(u_2),\dots,f(u_{i-1})$ and we would like to choose $f(u_i)$. Let $p$ be the parent of $u_i$. Note that $f(p)$ is already chosen. If $u_i\in A$, then $f(u_i)$ must be chosen as a two-element subset of $f(p)$, so we have $3$ options. If $u_i\in B$, then $f(u_i)$ must be chosen as a $3$-element set containing $f(p)$, so we have $k-2$ options.
 Thus,
 \[|\mathfrak{C}|\le |\mathfrak{C}'|={{k}\choose{2}}(k-2)^{|B|}3^{|A|-1}\le k^{m+2}3^{3m}.\]

 Combining this with our earlier observation that the number of choices for $(T,o)$ is most~$4^{4m}$, the statement follows.\end{proof}

\begin{lemma}\label{nodensesubcomplex}
 There is a $c>0$ such that
 \begin{align*}\lim_{n\to\infty}\mathbb{P}(\mathcal{T}_n\text{ has a $k$-dense subcomplex for some }k<c\log(n))&=0,\\\lim_{n\to\infty}\mathbb{P}(\mathcal{T}_n\text{ has a $k$-semidense subcomplex for some }k<c\log(n))&=0.
 \end{align*}
\end{lemma}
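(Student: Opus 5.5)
The plan is a first moment computation, combining Lemma~\ref{subcomplexcount} with the bound \eqref{hadamard}.

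\textbf{Dense case.} If $\mathcal{T}_n$ has a $k$-dense subcomplex, then it has one with exactly $k+1$ triangular faces. Take a $k$-dense $\mathcal{C}$ and grow a subset of $F_2(\mathcal{C})$ one face at a time, each new face adjacent in $H_{\mathcal{C}}$ to an edge already covered (possible since $H_{\mathcal{C}}$ is connected). This gives a complex $\mathcal{C}'$ with $k+1$ faces, connected $H_{\mathcal{C}'}$, and vertex set of size $k'\le k$. It then suffices to bound the expected number of subcomplexes on some $k'$-vertex set with at least $k'+1$ faces. To keep the counting honest, I would work directly with $\mathcal{C}$ minimal, i.e.\ with exactly $k+1$ faces, since $k'\le k$ only helps.

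For fixed $k$, choose the vertex set in $\binom{n}{k}\le n^k$ ways. By Lemma~\ref{subcomplexcount} with $m=k+1$, the number of complexes is at most $4^{4(k+1)}3^{3(k+1)}k^{k+3}$. By \eqref{hadamard}, each one lies in $\mathcal{T}_n$ with probability at most $(3/n)^{k+1}$. The expected count is therefore at most
\[ n^k\cdot C^{k+1}k^{k+3}\cdot 3^{k+1}n^{-k-1}\le \frac{(C')^{k}k^{k+3}}{n}. \]
For $k<c\log n$ we have $(C')^k k^{k+3}\le \exp(O(k\log k))\le n^{1/2}$, provided $c$ is small. Indeed, $k\log k\le c\log n\cdot\log\log n$, which is too large, so this crude bound fails, and this is the main obstacle. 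The fix is to use $\binom{n}{k}\le n^k/k!$ instead of $n^k$, so that $k!$ cancels $k^{k}$ up to $e^k$. Then each term is at most $(C'')^k k^3/n$. Summing over $k<c\log n$ gives $O(n^{C c}\log^4 n/n)\to 0$ once $c$ is small. This proves the first limit.

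\textbf{Semidense case.} Let $\mathcal{C}=\cup_{i=1}^r\mathcal{C}_i$ on $k$ vertices with at least $k+r-1$ faces. By discarding faces while keeping each $H_{\mathcal{C}_i}$ connected, or by restricting to a subfamily, I would arrange exactly $m=k+r-1$ faces. Since each $\mathcal{C}_i$ has at least one face, $r\le m$, and also $r\le k$ roughly, since each piece needs at least $3$ vertices. A cleaner reduction may come from how semidense complexes arise in Lemma~\ref{p2dense}, where each $\mathcal{C}_i$ has $|F_2|\ge k_i$ vertices' worth of faces. In general I would count as follows. Choose the vertex set ($n^k/k!$ ways), then choose the sizes $m_i$ and vertex subsets of each piece. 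Apply Lemma~\ref{subcomplexcount} to each piece with the full $k$-set as ground set, which gives at most $C^{m_i}k^{m_i+2}$ choices. Multiplying, and dividing by $r!$ for unordered pieces, gives a bound of $\exp(O(m))\,k^{m+2r}/r!$ times a composition count $2^m$.

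The probability factor is $(3/n)^{k+r-1}$, so the expected number is at most
\[ \frac{n^k}{k!}\cdot\frac{C^m k^{k+3r-1}}{r!}\cdot n^{-k-r+1}. \]
For $r\ge 2$ this has a factor $n^{-(r-1)}$, which beats $k^{3r}C^{r}\le (c\log n)^{3r}$. The sum over $r\ge2$ and $k<c\log n$ is then $O(\mathrm{polylog}(n)\,n^{Cc}/n)\to0$. The delicate point is again making sure the $k^{m}$ factor from Lemma~\ref{subcomplexcount} is absorbed by $k!$ and the extra powers of $n$. If the per-piece application overcounts, I would instead apply the tree-encoding argument of Lemma~\ref{subcomplexcount} to the forest of spanning trees of the $H_{\mathcal{C}_i}$ with $r$ roots. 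This gives $\binom{k}{2}^r(k-2)^m3^{3m}4^{4m}$, which yields the same conclusion.
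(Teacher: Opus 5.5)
Your proposal is correct and follows essentially the paper's argument: you trim to exactly $k+1$ (resp.\ $k+r-1$) faces, then take a union bound combining Lemma~\ref{subcomplexcount} (applied piece by piece in the semidense case) with \eqref{hadamard} and $\binom{n}{k}\le n^k/k!$ (the paper uses $(en/k)^k$), so that the factor $n^{-(r-1)}$ wins. The one slip is the aside that $r\le k$ because each piece needs $3$ vertices; pieces may share vertices, so only $r\le\binom{k}{3}$ holds, but your geometric summation over $r\ge 2$ never uses that claim.
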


\begin{proof}
 Let $V$ be a $k$-element subset of $[n]$, and let $D_V$ be the set of complexes $\mathcal{C}$ on the vertex set $V$ such that $\mathcal{C}$ has $k+1$ triangular faces and $H_\mathcal{C}$ is connected. Then combining \eqref{hadamard} and Lemma~\ref{subcomplexcount}, we obtain
 \[\sum_{\mathcal{C}\in D_V}\mathbb{P}(\mathcal{C}\subset \mathcal{T}_n)\le |D_V| \left(\frac{3}{n}\right)^{k+1}=\left(\frac{3}{n}\right)^{k+1}4^{4(k+1)}3^{3(k+1)}k^{k+3}.\]
 Combining this with the estimate ${{n}\choose{k}}\le \left(\frac{en}{k}\right)^k$, we see that
 \[\sum_{V\in {{[n]}\choose{k}}}\sum_{\mathcal{C}\in D_V}\mathbb{P}(\mathcal{C}\subset \mathcal{T}_n)\le {{n}\choose {k}}\left(\frac{3}{n}\right)^{k+1}4^{4(k+1)}3^{3(k+1)}k^{k+3}\le\frac{k^3}n e^k4^{4(k+1)}3^{4(k+1)}\le \frac{k^3}n C^k\]
 for a large enough constant $C$. Thus,
 \[\sum_{\substack{V\subset [n]\\3\le |V|\le b}}\sum_{\mathcal{C}\in D_V}\mathbb{P}(\mathcal{C}\subset \mathcal{T}_n)\le \frac{b^4}n C^b.\]
 
 If $\mathcal{C}$ is a $k$-dense complex, then there is a subcomplex $\mathcal{C}'$ of $\mathcal{C}$ with $k+1$ triangular faces such that $H_{\mathcal{C}'}$ is connected. Indeed, one can run a breadth first search on $H_{\mathcal{C}}$ and only keep the first $k+1$ triangular faces that we reach. Thus, the probability of the event that $\mathcal{T}_n$ contains a $k$-dense subcomplex for some $k<c\log(n)$ is at most
 \[\sum_{\substack{V\subset [n]\\3\le|V|\le c\log(n)}}\sum_{\mathcal{C}\in D_V}\mathbb{P}(\mathcal{C}\subset \mathcal{T}_n)\le \frac{(c\log(n))^4}n C^{c\log(n)}.\]
 Here the right hand side converges to $0$ provided that $c$ is small enough.

 Let $V$ be a $k$-element subset of $[n]$, $r>1$ and let $k_1,k_2,\dots,k_r$ be nonnegative integers such that $\sum_{i=1}^r k_i=k+r-1$. Let $D_V(k_1,\dots,k_r)$ be the set of complexes $\mathcal{C}$ on the vertex set $V$ such that $\mathcal{C}=\cup_{i=1}^r\mathcal{C}_i$, where $\mathcal{C}_i$ has $k_i$ triangular faces and $H_{\mathcal{C}_i}$ is connected, moreover, the sets $F_2(\mathcal{C}_i)$ are pairwise disjoint. Then by Lemma~\ref{subcomplexcount}, we have
 \[|D_V(k_1,\dots,k_r)|\le \prod_{i=1}^r 4^{4k_i}3^{3k_i}k^{k_i+2}=4^{4(k+r-1)}3^{3(k+r-1)}k^{k+3r-1}.\]

Combining this with \eqref{hadamard}, we obtain
\[\sum_{\mathcal{C}\in D_V(k_1,\dots,k_r)}\mathbb{P}(\mathcal{C}\subset \mathcal{T}_n)\le |D_V(k_1,\dots,k_2)| \left(\frac{3}{n}\right)^{k+r-1}=\left(\frac{3}{n}\right)^{k+r-1}4^{4(k+r-1)}3^{3(k+r-1)}k^{k+3r-1}.\]
 
 Therefore,
 \[\sum_{V\in {{[n]}\choose{k}}}\sum_{\mathcal{C}\in D_V(k_1,\dots,k_r)}\mathbb{P}(\mathcal{C}\subset \mathcal{T}_n)\le\left(\frac{en}k\right)^k \left(\frac{3}{n}\right)^{k+r-1}4^{4(k+r-1)}3^{3(k+r-1)}k^{k+3r-1}\le \frac{k^{3r-1}}{n^{r-1}} C^{k+r}\]
 for a large enough constant $C$. Let $D_V[r]$ be the union of all the sets $D_V(k_1,\dots,k_r)$, where $r>1$ and $\sum_{i=1}^r k_i=k+r-1$. Then
 \[\sum_{V\in {{[n]}\choose{k}}}\sum_{\mathcal{C}\in D_V[r]}\mathbb{P}(\mathcal{C}\subset \mathcal{T}_n)\le \frac{(k+r)^rk^{3r-1}}{n^{r-1}}C^{k+r}.\]
 
 If $D_V'$ is the union of all the sets $D_V[r]$, where ${{|V|}\choose {3}}\ge r>1$, then 
 \[\sum_{\substack{V\subset [n]\\3\le|V|\le c\log(n)}}\sum_{\mathcal{C}\in D_V'}\mathbb{P}(\mathcal{C}\subset \mathcal{T}_n)\le \sum_{r=2}^{(c \log(n))^3}\frac{(c\log(n)+r)^r(c\log(n))^{3r}}{n^{r-1}}C^{c\log(n)+r}.\]
 Here the right hand side converges to $0$ as $n\to\infty$ provided that $c$ is small enough.

 The same argument as before show that any $k$-semidense complex on the vertex set $V$, has a subcomplex which is in $D_V'$. Thus, the statement follows. 
\end{proof}

\textbf{Finishing the proof of Theorem~\ref{thm1}:}

Part~\eqref{thm1parta} of Theorem~\ref{thm1} follows by combining Lemma~\ref{oddpdense} and Lemma~\ref{nodensesubcomplex}.

Part~\eqref{thm1partb} of Theorem~\ref{thm1} follows by combining Lemma~\ref{p2dense} and Lemma~\ref{nodensesubcomplex}.

\medskip

\textbf{Open question:} 

Can we improve the bound $c\log(n)$ in Theorem~\ref{thm1}?

\section{Existence results -- The proof of Theorem~\ref{thm2}}
\subsection{Setting up the second moment method}

In this section, we prove Theorem~\ref{thm2} by a second moment argument. Our proof follows \cite{meszaros20242}, but in a more general setting.

Let $G$ be a NUG. We assign a simplicial complex $\mathcal{C}_G$ to $G$ as follows. The construction depends on whether the unique cycle of $G$ is a triangle or not. We start with the case when the unique cycle of $G$ is not a triangle. Let us orient the edges of $G$ such that the edges of the unique cycle form a directed cycle, and all the edges not in the cycle are oriented towards the cycle. This way we get an oriented graph $\vec{G}$ where the outdegree of each vertex is~$1$. Given any vertex $v$, let $p(v)$ be the unique vertex such that $(v,p(v))$ is an edge of $\vec{G}$. For all the vertices $v$ of $G$, add the triangle $\{v,p(v),p(p(v))\}$ to the complex $\mathcal{C}_G$. We may choose an orientation of the face $\{v,p(v),p(p(v))\}$ such that it aligns with both of the oriented edges $(v,p(v))$ and $(p(v),p(p(v)))$. 

Next, we move on to the case when $G$ contains a triangle. Again consider an orientation $\vec{G}$ of $G$ as above. Let $v_1,v_2,v_3$ be the oriented triangle of $\vec{G}$. Let $u_i$ be a neighbor of $v_i$ not contained in the triangle. Let $V_0=\{v_1,v_2,v_3,u_1,u_2,u_3\}$. Let us consider the following $6$ triangles:
\begin{equation}\label{sixtriangles}
 \{u_1,v_1,v_2\},\{v_1,v_2,u_2\},\{u_2,v_2,v_3\},\{v_2,v_3,u_3\},\{u_3,v_3,v_1\},\{v_3,v_1,u_1\}.
\end{equation}

Moreover, for all the vertices $v\in V(G)\setminus V_0$, consider the triangle $\{v,p(v),p(p(v))\}$. Let $\mathcal{C}_G$ consist of all of these triangles.

Let $J_G$ be the matrix obtained from the $2$-dimensional boundary matrix of $\mathcal{C}_G$ by considering only the rows that correspond to edges of $G$. Thus, the rows of $J_G$ are indexed by the edges of $G$, and the columns are indexed by the triangular faces of $\mathcal{C}_G$. Observe $J_G$ is a square matrix. Also note that in the case when the unique cycle of $G$ has length at least $5$, $J_G$ is a $0-1$ matrix, because of our choices of orientations. 

We mention that the complex $\mathcal{C}_G$ depends on the orientation of $G$ or the choices of $u_1,u_2,u_3$, but we do not indicate this dependence in the notation.

\begin{lemma}\label{detJG2}
 We have $|\det(J_G)|=2$.
\end{lemma}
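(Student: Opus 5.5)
Both cases rest on one reduction: peel off the tree part of $J_G$, so that only the cycle block remains.

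\textbf{Tree reduction.} Index each triangle by the vertex $v$ that generates it. Outside the cycle core, each vertex $v$ is matched to its out-edge $(v,p(v))$ and to the triangle $\{v,p(v),p(p(v))\}$. The core is the cycle itself when its length is at least $5$, and $V_0$ in the triangle case.

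Order the non-core vertices by decreasing distance to the cycle. Consider the row of the edge $\{v,p(v)\}$ with $v$ a leaf, i.e. with no in-edges. The only triangles containing this edge are those generated by $v$ itself, or by some $w$ with $p(w)=v$. A leaf has no such $w$. If $p(v)$ is generated together with $p(p(v))$, the triangle generated by $p(v)$ is $\{p(v),p^2(v),p^3(v)\}$, which does not contain $v$.

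So this row has a single nonzero entry, $\pm1$, in the column of $v$. Expanding along it deletes $v$ from $G$ and leaves a smaller NUG-like structure of the same type. One should check that the remaining triangles still have the same form, so the induction is legitimate.

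Iterating, $|\det J_G|=|\det J_{\mathrm{core}}|$. For cycle length at least $5$ the core is the cycle alone; in the triangle case it is the triangle together with $u_1,u_2,u_3$. Caution: in the triangle case, vertices whose $p$ lies in the cycle are handled the same way, except that $u_i$ is kept.

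\textbf{Cycle of length $\ell\ge5$.} The core matrix is the $\ell\times\ell$ matrix with rows the cycle edges $e_i=\{c_i,c_{i+1}\}$ and columns the triangles $\{c_i,c_{i+1},c_{i+2}\}$. Triangle $i$ contains $e_i$ and $e_{i+1}$, and also the chord $\{c_i,c_{i+2}\}$, which is not a row. Since $\ell\ge5$, no triangle contains a third cycle edge.

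With the chosen orientations all entries are $0$ or $1$, so the matrix is $I+P$ for a cyclic permutation matrix $P$ of order $\ell$. Its determinant is
\[
\prod_{\zeta^\ell=1}(1+\zeta)=1-(-1)^\ell=2,
\]
using that $\ell$ is odd.

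\textbf{Triangle case.} After reduction the rows are the $6$ edges $v_1v_2$, $v_2v_3$, $v_3v_1$, $u_1v_1$, $u_2v_2$, $u_3v_3$, and the columns are the six triangles in \eqref{sixtriangles}. The row $u_1v_1$ meets only $\{u_1,v_1,v_2\}$ and $\{v_3,v_1,u_1\}$, and similarly for the other spoke edges. Each triangle-edge row meets two of the six triangles: for example, $v_1v_2$ meets $\{u_1,v_1,v_2\}$ and $\{v_1,v_2,u_2\}$. So the bipartite graph of nonzero entries is a single $12$-cycle, an even cycle of length $2\cdot6$.

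The determinant of a matrix whose support is a single $2k$-cycle is $\pm(\prod a\pm\prod b)$, a sum of the two perfect matchings with signs. All entries are $\pm1$, so the result is $0$ or $\pm2$. The main obstacle is this sign computation. I would fix the orientation $v_1\to v_2\to v_3\to v_1$ and compute explicitly, choosing the triangle orientations to make the bookkeeping simple; this should give $2$.

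The reduced core still contains the vertices $u_i$, and these sit in the tree part of the original matrix. This does not affect the result, because the same leaf-peeling argument applies to every other vertex.
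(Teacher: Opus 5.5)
Your strategy is the paper's: leaf-peeling by expanding along rows with a single nonzero entry, the odd-cycle core as the circulant $I+P$ with $\det=1-(-1)^\ell=2$, and a direct $6\times 6$ computation in the triangle case. The gap is that last computation, which you leave at ``this should give $2$.'' You note yourself that a matrix whose nonzero pattern is a single $12$-cycle has determinant $0$ or $\pm 2$. Deciding which is the entire content of the triangle case, so it has to be checked. Orientation choices cannot matter here: reorienting an edge or a triangle only flips the sign of a row or column.

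The check is short. Order the rows as $u_1v_1, v_1v_2, u_2v_2, v_2v_3, u_3v_3, v_3v_1$ and the columns as in \eqref{sixtriangles}, with the paper's orientations. Then every diagonal entry is $+1$. The other nonzero entries sit at positions $(2,1),(3,2),(4,3),(5,4),(6,5),(1,6)$ with values $+1,-1,+1,-1,+1,-1$. Only the identity and the $6$-cycle $i\mapsto i-1$ contribute, so $\det=1+(-1)^5\cdot(-1)^3=2$.

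An orientation-free version of the same check also works. If $\det J_G=0$, there would be a nonzero real cochain $f$, supported on the six edges, satisfying the cocycle condition on the six triangles. Each triangle contains two of these edges meeting at a vertex $y$, say $xy$ and $zy$, and the condition reads $f(x,y)=f(z,y)$. Set $f(u_1,v_1)=a$ and go around the triangles in order. You get $f(v_1,v_2)=-a$, $f(u_2,v_2)=-a$, $f(v_2,v_3)=a$, $f(u_3,v_3)=a$, $f(v_3,v_1)=-a$, and finally $f(u_1,v_1)=-a$. This forces $a=0$, so $\det J_G\neq 0$ and hence $|\det J_G|=2$.
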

\begin{proof}
 We start by the case, when $G$ is a cycle of length at least $5$. Let $v_1,v_2,\dots,v_k$ be the vertices of $G$ listed as we move along the directed cycle $\vec{G}$. Let us list the edges of $G$ in the order $(v_1,v_2),(v_2,v_3),\dots,(v_{k-1},v_k),(v_k,v_1)$. Moreover, let us list the triangular faces of $\mathcal{C}_G$ in the order $\{v_1,v_2,v_3\},\{v_2,v_3,v_4\},\dots,\{v_{k-1},v_k,v_1\},\{v_k,v_1,v_2\}$. With these orderings $J_G=(J_{i,j})_{i,j=1}^n$, where
 \[J_{i,j}=\begin{cases}
 1&\text{ if $i=j$ or $i-j\equiv 1\mod{k}$,}\\
 0&\text{ otherwise.}
 \end{cases}\]
 Thus, $J_G$ is a circulant matrix, with associated polynomial $1+x$. Relying on the fact that $k$ is odd, one can easily see that $\det J_G=2$.

 Next, the case when the unique cycle of $G$ has length at least $5$ can be handled by induction. If $G$ is not a cycle, then it has a leaf $v$. Since $v$ is a leaf, $\sigma=\{v,p(v),p(p(v))\}$ is the unique triangular face of $\mathcal{C}_G$ that contains the edge $(v,p(v))$. Thus, the row of $J_G$ corresponding to $(v,p(v))$ contains exactly one non-zero entry, namely, in the column corresponding to~$\sigma$, we have a $1$. Expanding the determinant along this row, we see that $|\det J_G|=|\det J_{G'}|$, where $G'$ is obtained from $G$ by deleting the leaf $v$. Thus, the statement follows by induction.

 Finally, we consider the case when the unique cycle of $G$ is a triangle. As before, by induction we can reduce the statement to the case when $G$ has $6$ vertices. In that case $J_G$ is the matrix
 \[ \begin{blockarray}{ccccccc}
&\{u_1,v_1,v_2\}&\{v_1,v_2,u_2\}&\{u_2,v_2,v_3\}&\{v_2,v_3,u_3\}&\{u_3,v_3,v_1\}&\{v_3,v_1,u_1\}\\
\begin{block}{c(cccccc)}
 (u_1,v_1) & +1 & 0 & 0 & 0 & 0 &-1\\
 (v_1,v_2) & +1 & +1 & 0 & 0 & 0&0\\
 (u_2,v_2) & 0 & -1 & +1 & 0 & 0&0\\
 (v_2,v_3) & 0 & 0 & +1 & +1 &0&0\\
 (u_3,v_3) & 0 & 0 & 0 & -1 &+1&0\\
 (v_3,v_1) & 0 & 0 & 0 & 0 &+1&+1\\
\end{block}
\end{blockarray}\quad.
\]
A straightforward calculation shows that $|\det(J_G)|=2$.
\end{proof}

Let $G$ be a graph with connected components $G_1,G_2,\dots,G_h$. Assume that each $G_i$ is a NUG. Let $\mathcal{C}_G$ be the union of the complexes $\mathcal{C}_{G_i}$. Let $\varphi:V(G)\to [n]$ be an injective map. We say that a $2$-dimensional simplicial complex $\mathcal{K}$ on the vertex set $[n]$ is $(G,\varphi)$-nice if $\varphi(\mathcal{C}_G)$ is a subcomplex of $\mathcal{K}$, and the only triangular faces of $\mathcal{K}$ that contain an edge of $\varphi(G)$ are the ones in $\varphi(\mathcal{C}_G)$.

The proof of the following statement is straightforward, once we observe that each triangular face of $\mathcal{C}_G$ contains exactly two edges of $G$.
\begin{lemma}\label{ifGvarphinice}
 Let $\mathcal{K}$ be $(G,\varphi)$-nice complex, and let $f $ be the unique element of $C^1(\mathcal{K},\mathbb{F}_2)$ such that $\suppr(f)=\varphi(G)$. Then $f\in Z^1(\mathcal{K},\mathbb{F}_2)$. 
\end{lemma}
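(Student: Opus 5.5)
We have to check that for every triangular face $\sigma=\{a,b,c\}$ of $\mathcal{K}$, the sum $f(a,b)+f(b,c)+f(c,a)$ vanishes in $\mathbb{F}_2$. Here $f$ is the indicator function of the edge set of $\varphi(G)$, and signs play no role over $\mathbb{F}_2$. So the cocycle condition at $\sigma$ says exactly that the boundary of $\sigma$ contains an even number of edges of $\varphi(G)$. We split into two cases according to whether $\sigma$ contains an edge of $\varphi(G)$.

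\emph{Case 1: $\sigma$ contains no edge of $\varphi(G)$.} Then the count is $0$, and the condition holds trivially.

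\emph{Case 2: $\sigma$ contains some edge of $\varphi(G)$.} By the definition of a $(G,\varphi)$-nice complex, $\sigma\in\varphi(\mathcal{C}_G)$. Since $\varphi$ is injective, the edges of $\varphi(G)$ inside $\varphi(\tau)$ correspond exactly to the edges of $G$ inside $\tau$. So it suffices to show that every triangular face $\tau$ of $\mathcal{C}_G$ contains exactly two edges of $G$, and we may work inside a single component $G_i$, which is a NUG.

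\emph{Faces $\tau=\{v,p(v),p(p(v))\}$.} These faces contain the edges $\{v,p(v)\}$ and $\{p(v),p(p(v))\}$ of $G$. The third pair $\{v,p(p(v))\}$ is not an edge of $G$. Indeed, if it were, then $v,p(v),p(p(v))$ would span a triangle in $G$. Since $G$ has a unique cycle, that triangle would be the cycle itself. But if the cycle has length at least $5$ this is impossible. If the cycle is a triangle, faces of this form are only used for $v\notin V_0$. Such a $v$ is not on the cycle, while any triangle in $G$ must lie on the cycle, so again this is impossible.

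\emph{The six faces in \eqref{sixtriangles}.} Take, for instance, $\{u_1,v_1,v_2\}$. It contains the cycle edge $\{v_1,v_2\}$ and the edge $\{u_1,v_1\}$. The pair $\{u_1,v_2\}$ is not an edge of $G$, since otherwise $u_1v_1v_2$ would be a second cycle, contradicting uniqueness of the cycle. The remaining five faces are handled the same way.

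In every case the boundary of $\sigma$ contains $0$ or $2$ edges of $\varphi(G)$, so $f\in Z^1(\mathcal{K},\mathbb{F}_2)$. The only point requiring care is verifying that the third pair is never an edge of $G$. This uses uniqueness of the cycle together with $u_i$ lying off the triangle; otherwise the argument is bookkeeping.
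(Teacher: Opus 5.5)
Your proof is correct and takes the same route as the paper. The paper calls the lemma straightforward once one observes that every triangular face of $\mathcal{C}_G$ contains exactly two edges of $G$; together with niceness, which keeps faces outside $\varphi(\mathcal{C}_G)$ from meeting any edge of $\varphi(G)$, this is exactly your argument, and your check that the third pair is never an edge of $G$ (by uniqueness of the cycle, with $u_i$ and $v\notin V_0$ lying off the triangle) supplies the detail the paper leaves to the reader.
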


Let us define the random variable
\[X_n=\left|\{\varphi:V(G)\to [n]\text{ injective }:\,\mathcal{T}_n \text{ is $(G,\varphi)$-nice}\}\right|.\]

In Section~\ref{secfirstmoment} and Section~\ref{secsecondmoment}, we prove that for all large enough $n$, we have
\begin{align}
\mathbb{E}X_n&\ge e^{-8Dm}\qquad\quad\text{ and}\label{firstmomentbound}\\
\mathbb{E}X_n^2&\le (3(m+3))^m.\label{secondmomentbound}
\end{align}
Thus, by the Paley–Zygmund inequality, for all large enough $n$, we have
\[\mathbb{P}(X_n>0)\ge \frac{\left(\mathbb{E}X_n\right)^2}{\mathbb{E}X_n^2}\ge \frac{e^{-16Dm}}{(3(m+3))^m}.\]

Note that on the event that $X_n>0$, we have an $f\in Z^1(\mathcal{T}_n,\mathbb{F}_2)$ such that $\supp_r(f)$ is isomorphic to $G$ by Lemma~\ref{ifGvarphinice}. Thus, Theorem~\ref{thm2} follows.

\subsection{Estimating the first moment}\label{secfirstmoment}
Let $\varphi:V(G)\to [n]$ be a fixed injective map. Let $A={{[n-1]}\choose{2}}\setminus \varphi(E(G))$, and let $B$ be the set of all $\sigma\in {{[n]}\choose{3}}$ such that $\sigma$ does not contain any edge of $\varphi(G)$.

Relying on these definitions, we see that a $2$-dimensional simplicial complex $\mathcal{K}$ on the vertex set $[n]$ is $(G,\varphi)$-nice if and only if 
\[
F_2(\varphi(\mathcal{C}_G))\subset F_2(\mathcal{K})\subset F_2(\varphi(\mathcal{C}_G))\cup B.
\]

Recall that $m$ denotes the number of vertices of $G$, which is the same as $|F_2(\mathcal{C}_G)|$, and $h$ denotes the number of connected components of $G$.
\begin{lemma}\label{lemma8} 
Assume that $n\notin \varphi(V(G))$. Let $\mathcal{K}$ be a $2$-dimensional simplicial complex with~${n-1}\choose{2}$ triangular faces and complete $1$-skeleton. Then $\mathcal{K}$ is a $(G,\varphi)$-nice hypertree if and only if 
\[F_2(\mathcal{K})=F_2(\varphi(\mathcal{C}_G))\cup B_0,\]
where $B_0\subset B$ such that $|B_0|={{n-1}\choose 2}-m$, and
\[\det J^r_{n,2}[A,B_0]\neq 0.\]

In this case,
\[|\det J^r_{n,2}[F_2(\mathcal{K})]|=2^h\left|\det J^r_{n,2}[A,B_0]\right|. \]

\end{lemma}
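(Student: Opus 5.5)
The plan is to view $J^r_{n,2}[F_2(\mathcal{K})]$ as a block matrix and exploit a block-triangular structure. Since $n\notin\varphi(V(G))$, every edge of $\varphi(G)$ lies in ${{[n-1]}\choose{2}}$. Hence the row set ${{[n-1]}\choose{2}}$ splits as $\varphi(E(G))\cup A$. The characterization of $(G,\varphi)$-niceness stated just before the lemma says that $\mathcal{K}$ is $(G,\varphi)$-nice exactly when $F_2(\mathcal{K})=F_2(\varphi(\mathcal{C}_G))\cup B_0$ with $B_0\subset B$. In that case the count of triangular faces forces $|B_0|={{n-1}\choose 2}-m$, because $|F_2(\mathcal{C}_G)|=m$.

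Next, order the rows as $(\varphi(E(G)),A)$ and the columns as $(F_2(\varphi(\mathcal{C}_G)),B_0)$. By definition, no triangle of $B$ contains an edge of $\varphi(G)$, so the block $J^r_{n,2}[\varphi(E(G)),B_0]$ is zero. The matrix is therefore block lower triangular, and
\[\det J^r_{n,2}[F_2(\mathcal{K})]=\pm\det J^r_{n,2}[\varphi(E(G)),F_2(\varphi(\mathcal{C}_G))]\cdot\det J^r_{n,2}[A,B_0].\]
The first factor is, up to signs of rows and columns coming from orientation conventions, the matrix $J_{G}$. More precisely, it is block diagonal over the components $G_1,\dots,G_h$, since triangles of $\mathcal{C}_{G_i}$ only involve edges of $G_i$ among the edges of $G$. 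By Lemma~\ref{detJG2}, each block has determinant $\pm2$, so the first factor has absolute value $2^h$.

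Both directions of the equivalence then follow from Lemma~\ref{Hdet}. If $\mathcal{K}$ is a $(G,\varphi)$-nice hypertree, it has the stated form, and $\det J^r_{n,2}[F_2(\mathcal{K})]\ne0$ forces $\det J^r_{n,2}[A,B_0]\neq0$. Conversely, if $F_2(\mathcal{K})$ has the stated form, then $\mathcal{K}$ is $(G,\varphi)$-nice, and the nonvanishing of $\det J^r_{n,2}[A,B_0]$ together with the factor $2^h\neq0$ shows that $\mathcal{K}$ is a hypertree. The determinant identity is exactly the factorization above.

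The main step to check carefully is identifying the $\varphi(E(G))\times F_2(\varphi(\mathcal{C}_G))$ block with $J_G$. The point to verify is that relabelling vertices via $\varphi$, and using the orientation induced by the ordering of $[n]$ rather than the chosen orientations, only multiplies rows and columns by $\pm1$, which does not change $|\det|$.
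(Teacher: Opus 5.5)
Your proposal is correct and follows essentially the same route as the paper. You reorder rows and columns to get a block lower triangular matrix whose $G$-part splits over the components $G_1,\dots,G_h$, take $|\det J_{G_i}|=2$ from Lemma~\ref{detJG2}, and conclude the equivalence via Lemma~\ref{Hdet}. Your remark that orientation conventions only multiply rows and columns by $\pm1$ makes explicit a point the paper passes over silently.
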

\begin{proof}

Note that $\mathcal{K}$ is $(G,\varphi)$-nice if and only if
\[F_2(\mathcal{K})=F_2(\varphi(\mathcal{C}_G))\cup B_0,\]
where $B_0\subset B$ and $|B_0|={{n-1}\choose 2}-m$.

Take a complex $\mathcal{K}$ as above. Reorder the rows of $J^r_{n,2}[F_2(\mathcal{K})]$ such that the first few rows are those which correspond to the edges of $\varphi(G_1)$, then we continue with the rows that correspond to the edges of $\varphi(G_2)$, and so on up to the edges of $\varphi(G_h)$. Finally, the last few rows are indexed by $A$.

We also can reorder the columns such that the columns corresponding to $F_2(\varphi(\mathcal{C}_{G_1}))$ come first, then the columns corresponding to $F_2(\varphi(\mathcal{C}_{G_2}))$, and so on up to the columns corresponding to $F_2(\varphi(\mathcal{C}_{G_h}))$. Finally, the last few columns are indexed by $B_0$. 

If $\tau\in F_1(\varphi(G_i))$ and $\tau$ is contained in a $\sigma\in F_2(\mathcal{K})$, then we must have $\sigma\in F_2(\varphi(\mathcal{C}_{G_i}))$. 
Thus, after the above reordering of the rows and columns, we get a block lower triangular matrix, where the $h+1$ diagonal blocks are \[\left(J^r_{n,2}[F_1(\varphi(G_i)),F_2(\varphi(\mathcal{C}_{G_i}))]\right)_{i=1}^h\quad\text{ and }\quad J^r_{n,2}[A,B_0].\] Note that these matrices are square matrices.

Therefore,
\begin{align*}\left|\det J^r_{n,2}[F_2(\mathcal{K})]\right|&=\left|\det J^r_{n,2}[A,B_0]\right|\prod_{i=1}^h \left|\det J^r_{n,2}[F_1(\varphi(G_i)),F_2(\varphi(\mathcal{C}_{G_i}))]\right|\\&=\left|\det J^r_{n,2}[A,B_0]\right|\prod_{i=1}^h \left|\det J_{G_i}\right|\\&=2^h\left|\det J^r_{n,2}[A,B_0]\right|
\end{align*}
where the last equality follows from Lemma~\ref{detJG2}.

The statement follows from Lemma~\ref{Hdet}.
\end{proof}

\begin{lemma}\label{Lemma9}
 Assume that $n\notin \varphi(V(G))$. Then
\[\mathbb{P}(\mathcal{T}_n\text{ is $(G,\varphi)$-nice})=\frac{2^{2h} \det(M_G) }{n^{{n-2}\choose{2}}},\]
where $M_G=J^r_{n,2}[A,B] (J^r_{n,2}[A,B])^T$.
\end{lemma}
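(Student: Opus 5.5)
By \eqref{measuredef} and Lemma~\ref{Hdet}, the probability that $\mathcal{T}_n$ is $(G,\varphi)$-nice is
\[
\frac{1}{n^{{n-2}\choose 2}}\sum_{\mathcal{K}} \det\!\left(J^r_{n,2}[F_2(\mathcal{K})]\right)^2,
\]
where the sum runs over $(G,\varphi)$-nice hypertrees $\mathcal{K}$. Every such $\mathcal{K}$ has complete $1$-skeleton and ${n-1}\choose 2$ triangular faces.

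Lemma~\ref{lemma8} parametrizes these complexes bijectively by the subsets $B_0\subset B$ with $|B_0|={{n-1}\choose 2}-m$ and $\det J^r_{n,2}[A,B_0]\neq 0$. It also gives $\det(J^r_{n,2}[F_2(\mathcal{K})])^2=2^{2h}\det(J^r_{n,2}[A,B_0])^2$. We may drop the nonvanishing condition, since terms with zero determinant contribute nothing. Hence the sum equals
\[
2^{2h}\sum_{B_0} \det\!\left(J^r_{n,2}[A,B_0]\right)^2,
\]
where $B_0$ now ranges over all subsets of $B$ of size ${{n-1}\choose 2}-m$.

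Next we check that this size equals $|A|$. The map $\varphi$ is injective and $n\notin\varphi(V(G))$, so all $m$ edges of $\varphi(G)$ lie in ${[n-1]\choose 2}$; here $|E(G)|=m$ because every component is unicyclic. Therefore $|A|={{n-1}\choose 2}-m$, and each $J^r_{n,2}[A,B_0]$ is a square submatrix of the $|A|\times|B|$ matrix $J^r_{n,2}[A,B]$. By the Cauchy--Binet formula,
\[
\sum_{B_0\subset B,\ |B_0|=|A|}\det\!\left(J^r_{n,2}[A,B_0]\right)^2=\det\!\left(J^r_{n,2}[A,B]\,(J^r_{n,2}[A,B])^T\right)=\det M_G.
\]
Combining the displays gives the stated formula.

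The only delicate points are the bookkeeping just done. First, the correspondence $\mathcal{K}\leftrightarrow B_0$ is a bijection, which holds because $F_2(\mathcal{K})$ determines $\mathcal{K}$. Second, the index counts must match so that Cauchy--Binet applies. If $|B|<|A|$, both sides are $0$ and the formula still holds. Nothing beyond Lemma~\ref{lemma8} and Cauchy--Binet is needed.
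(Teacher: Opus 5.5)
Your proof is correct and follows the same route as the paper. You use Lemma~\ref{lemma8} to rewrite the sum of squared determinants over $(G,\varphi)$-nice hypertrees as $2^{2h}\sum_{B_0}\det(J^r_{n,2}[A,B_0])^2$, and then apply Cauchy--Binet. Your extra bookkeeping is exactly what the paper leaves implicit: $|A|=\binom{n-1}{2}-m$ because $G$ has $m$ edges and $n\notin\varphi(V(G))$, and terms with zero determinant can be added freely.
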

\begin{proof}
Combining Lemma~\ref{lemma8} with the Cauchy-Binet formula, we have
\[\sum_{\substack{\mathcal{T}\text{ is $(G,\varphi)$-nice}\\\text{hypertree on $[n]$}}} |\det J^r_{n,2}[F_2(\mathcal{T})]|^2=\sum_{\substack{B_0\subset B\\|B_0|={{n-1}\choose {2}}-m}} 2^{2h}|\det J^r_{n,2}[A,B_0]|^2=2^{2h} \det(M_G).\]
Combining this with \eqref{measuredef} and Lemma~\ref{Hdet}, the statement follows.
\end{proof}

Recall that $D$ denotes the maximum degree of $G$.

\begin{lemma}\label{phinicebound}
Assuming that $n$ is large enough, for any injective map $\varphi:V(G)\to [n]$, we have
\[\mathbb{P}(\mathcal{T}_n\text{ is $(G,\varphi)$-nice })\ge 2^{2h} n^{-m} e^{-8Dm}.\]
\end{lemma}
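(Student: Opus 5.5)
Since the law of $\mathcal{T}_n$ is invariant under relabeling the vertices, the probability in question does not depend on $\varphi$. I would therefore first assume that $n\notin\varphi(V(G))$. By Lemma~\ref{Lemma9} it then suffices to show
\[
\det(M_G)\ge n^{{n-2}\choose 2}\, n^{-m}e^{-8Dm}.
\]

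Let $E=\varphi(E(G))$, so that $|E|=m$. Let $L=J^r_{n,2}(J^r_{n,2})^T$, indexed by ${[n-1]\choose 2}$, and let $N=J^r_{n,2}[A,B^c]$, where $B^c={{[n]}\choose 3}\setminus B$ is the set of triangles meeting an edge of $E$. Then
\[
M_G=L[A,A]-NN^T,
\]
so
\[
\det M_G=\det L[A,A]\cdot\det\bigl(I-N^T L[A,A]^{-1}N\bigr).
\]
I would bound the two factors separately.

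\emph{First factor.} Use $J_{n,1}J_{n,2}=0$ to write $L=nI-J_{n,1}[*,{[n-1]\choose 2}]^TJ_{n,1}[*,{[n-1]\choose 2}]$. The Gram matrix $J_{n,1}[*,{[n-1]\choose 2}]J_{n,1}[*,{[n-1]\choose 2}]^T$ is the vertex Laplacian of $K_{n-1}$ with $n$ added as an isolated vertex. Hence $L$ has eigenvalue $1$ with multiplicity $n-2$ and eigenvalue $n$ on the rest. This gives $\det L=n^{{n-2}\choose 2}$, consistent with Lemma~\ref{Hdet} and Kalai's formula. It also gives
\[
L^{-1}=\tfrac1n I+\bigl(1-\tfrac1n\bigr)\Pi,
\]
where $\Pi$ is the orthogonal projection onto the cut space. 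By Jacobi's identity, $\det L[A,A]=\det L\cdot\det\bigl(L^{-1}[E,E]\bigr)$. Since $\Pi[E,E]$ is positive semidefinite, $L^{-1}[E,E]\succeq \frac1n I$, and therefore
\[
\det L[A,A]\ge n^{{n-2}\choose 2}n^{-m}.
\]

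\emph{Second factor.} This is the main obstacle: I must show $\det(I-N^TL[A,A]^{-1}N)\ge e^{-8Dm}$. The matrix $I-N^TL[A,A]^{-1}N$ is positive semidefinite, being $L[A,A]^{-1/2}M_GL[A,A]^{-1/2}$ conjugated appropriately. I first try to show $L[A,A]^{-1}\preceq \frac Cn I$ on the relevant subspace, using $L\succeq I$, the explicit form of $L^{-1}$, and the Schur complement formula
\[
L[A,A]^{-1}=L^{-1}[A,A]-L^{-1}[A,E]\bigl(L^{-1}[E,E]\bigr)^{-1}L^{-1}[E,A].
\]
The correction terms have rank at most $n+m$ and can be controlled. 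Next I would analyze $N^TN$. A triangle $\{a,b,w\}$ with $\{a,b\}\in E$ has column supported on $\{a,w\},\{b,w\}$ (minus those in $E$ or containing $n$). Grouping columns by the third vertex $w$, the blocks for distinct $w\notin\varphi(V(G))$ are orthogonal. Each such block is a copy of the signed incidence Gram matrix of $G$, with norm at most $2D$. The $O(m)$ values $w\in\varphi(V(G))$ contribute only $O(m^2)$ columns. So $N^TL[A,A]^{-1}N$ has eigenvalues $O(D/n)$ apart from a bounded number of exceptions. Its trace is at most about $\frac{1}{n}\cdot 2\cdot(\text{number of columns})\le \frac{2m(n+O(m))}{n}$. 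Summing logarithms with $\log(1-x)\ge -2x$ for small $x$ yields roughly $e^{-4Dm}$. The spare factor of $e^{-4Dm}$ absorbs the low-rank corrections and the $O(m^2)$ exceptional columns for $n$ large, after checking that no eigenvalue approaches $1$.
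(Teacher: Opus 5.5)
The parts up to the second factor are correct: the reduction to $n\notin\varphi(V(G))$, the factorization $\det M_G=\det L[A,A]\cdot\det(I-N^TL[A,A]^{-1}N)$, and the bound $\det L[A,A]\ge n^{\binom{n-2}{2}-m}$ via the explicit $L^{-1}$ and Jacobi's identity. The gap is the second factor, which carries the whole difficulty.

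\textbf{Why the sketch for the second factor fails.} The bound $L[A,A]^{-1}\preceq\frac Cn I$ is false on the column space of $N$. The rank-$(n+m)$ correction you set aside is exactly where the problem is.

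\begin{itemize}
\item Write $L[A,A]=nI-N_G$ with $N_G=(J_{n-1,1}[*,A])^TJ_{n-1,1}[*,A]$. Then $L[A,A]$ preserves the $(n-2)$-dimensional coboundary space $B^1(A)$ of the graph $A$ on $[n-1]$.
\item On $B^1(A)$ it acts with eigenvalues $1+\lambda_i$. Here the $\lambda_i$ are the eigenvalues of the Laplacian of $\varphi(G)$, viewed as a graph on $[n-1]$, on the complement of the constant vector.
\item $M_G-I=J^r_{n,2}[A,B^r](J^r_{n,2}[A,B^r])^T$ annihilates $B^1(A)$. Hence $NN^T=L[A,A]-I$ on $B^1(A)$.
\item So the eigenvectors with $\lambda_i\neq 0$ lie in the column space of $N$, and $N^TL[A,A]^{-1}N$ has eigenvalues $\lambda_i/(1+\lambda_i)$.
\end{itemize}

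These eigenvalues are of order one, not $O(D/n)$. For large $\lambda_i$ they also fall outside the range where $\log(1-x)\ge-2x$ holds.

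More basically, a correction of rank about $n$ and norm of order one cannot be handled by rank/norm bookkeeping. Nothing in your sketch rules out $\Theta(n)$ eigenvalues of order one, or an eigenvalue equal to $1$ (i.e.\ $\det M_G=0$). ``Can be controlled'' and ``after checking that no eigenvalue approaches 1'' are precisely the content of the lemma, and you do not supply them.

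\textbf{The missing idea.} The paper's proof rests on the fact that $B^1(A)$ lies in the $1$-eigenspace of $M_G$. Consequently $B^1(A)$ and $Z_1(A)=\ker N_G$ are invariant under $M_G$, $L[A,A]$ and $NN^T$ simultaneously. Your second factor then splits exactly:
\[
\det\bigl(I-N^TL[A,A]^{-1}N\bigr)=\prod_{\lambda_i\neq0}(1+\lambda_i)^{-1}\cdot\det\bigl(I-\tfrac1n NN^T|_{Z_1(A)}\bigr).
\]
The product is at least $e^{-\sum_i\lambda_i}=e^{-2m}$. The last determinant can be bounded using $\mathrm{rank}(NN^T)\le mn$ and $\|NN^T\|\le 2\chi$ (the paper's coloring of $\overline B$), or using your trace bound.

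However, the factor $\prod(1+\lambda_i)$ you discarded from $\det L[A,A]$ is exactly what cancels here, so passing through $L[A,A]^{-1}$ gains nothing. The paper works with $M_G=nI-N_G-M_G'$ directly:
\begin{itemize}
\item it reads off $n-2$ eigenvalues equal to $1$ from $B^1(A)$;
\item Courant--Fischer on $\ker N_G\cap\ker M_G'$ gives $\binom{n-2}{2}-m-mn$ eigenvalues at least $n$;
\item Courant--Fischer on $\ker N_G$ gives $mn$ further eigenvalues at least $n-2\chi$.
\end{itemize}
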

\begin{proof}
 By symmetry we may assume that $n\not\in \varphi(V(G))$. Let $B^r=B\cap {{[n-1]}\choose{3}}$. Note that
\[M_G-I=J^r_{n,2}[A,B^r] (J^r_{n,2}[A,B^r])^T.\]
One can consider $A$ as a graph, or in other words, a $1$-dimensional simplicial complex on the vertex set $[n-1]$. Note that since $A$ is connected, $B^1(A,\mathbb{R})$ has dimension $(n-1)-1=n-2$. Also, $B^1(A,\mathbb{R})$ is contained in the kernel of $J^r_{n,2}[A,B^r] (J^r_{n,2}[A,B^r])^T$. Thus, $1$ is an eigenvalue of $M_G$ with multiplicity at least $n-2$. 

We have
\[J^r_{n,2}(J^r_{n,2})^T-nI=-(J_{n-1,1})^T J_{n-1,1},\]
see \cite[(2) in the proof of Lemma 3]{kalai1983enumeration}.

Thus, 
\begin{equation}\label{meq1}
 J^r_{n,2}[A,*](J^r_{n,2}[A,*])^T-nI=-(J_{n-1,1}[*,A])^TJ_{n-1,1}[*,A].
\end{equation}

Let $\overline{B}={{[n]}\choose {3}}\setminus B$. Then
\begin{equation}\label{meq2}
 J^r_{n,2}[A,*](J^r_{n,2}[A,*])^T=J^r_{n,2}[A,B](J^r_{n,2}[A,B])^T+J^r_{n,2}[A,\overline{B}](J^r_{n,2}[A,\overline{B}])^T.
\end{equation}
Let us introduce the notations \begin{align*}N_G&=(J_{n-1,1}[*,A])^TJ_{n-1,1}[*,A],\\M_G'&=J^r_{n,2}[A,\overline{B}](J^r_{n,2}[A,\overline{B}])^T.\end{align*} 
Combining \eqref{meq1} and \eqref{meq2}, we obtain that
\[M_G=nI-N_G-M_G'.\]
Note that $J_{n-1,1}[*,A]$ has $n-1$ rows, which are linearly dependent since their sum is $0$. Thus, the rank of $J_{n-1,1}[\cdot,A]$ is at most $n-2$. Therefore, $N_G$ has rank at most $n-2$. Thus,
\[\dim \ker N_G\ge {{n-1}\choose{2}}-m-(n-2)={{n-2}\choose{2}}-m.\]

Let us consider the graph $H_{(G,\varphi)}$ on the vertex set $\overline{B}$, where two distinct triangular faces $\sigma_1,\sigma_2\in \overline{B}$ are connected if there is an edge $\tau\in A$ such that $\tau\subset \sigma_1$ and $\tau\subset \sigma_2$. If for an edge $\{u,v\}\in A$ and vertex $w$, we have $\{u,v,w\}\in \overline{B}$, then $\{u,w\}$ or $\{v,w\}$ is an edge of $\varphi(G)$. Thus, if $D$ is the maximum degree of $G$, then for all $\tau\in A$, there are at most $2D$ triangles in $\sigma\in\overline{B}$ such that $\tau\subset \sigma$. The boundary of any $\sigma\in\overline{B}$ contains at most $2$ edges from $A$. Thus, the degree of $\sigma$ in the $H_{(G,\varphi)}$ is at most $2(2D-1)$. Therefore, the graph $H_{(G,\varphi)}$ has a proper coloring with $\chi=2(2D-1)+1$ colors. That is, there is partition of $\overline{B}$ into sets $L_1,L_2,\dots,L_{\chi}$ such that if $\sigma_1,\sigma_2\in L_i$, and $\sigma_1\neq \sigma_2$, then $\partial \sigma_1\cap \partial \sigma_2\cap A= \emptyset$. Let \[Q_i=J^r_{n,2}[A,L_i](J^r_{n,2}[A,L_i])^T.\] Then $M_G'=\sum_{i=1}^\chi Q_i$. Moreover, $Q_i$ is block diagonal matrix, where each diagonal block is either
\[(0),\quad (1),\quad \begin{pmatrix} 1&1\\1&1\end{pmatrix},\quad \text{or}\quad \begin{pmatrix} 1&-1\\-1&1\end{pmatrix}.\]
Thus, the operator norm of $Q_i$ is at most $2$. Thus, $M_G'$ has operator norm at most $2\chi$. The rank of $M_G'$ is clearly at most $|\overline{B}|\le mn$. It follows that $\dim (\ker N_G\cap \ker M_G')\ge \dim \ker N_G-mn$. All the vectors in $\ker N_G\cap \ker M_G'$ are eigenvectors of $M_G$ with eigenvalue~$n$. Moreover, for any vector in $v\in\ker N_G$, we have $v^TM_G v\ge (n-2\chi)\|v\|_2^2$. Thus, by the Courant-Fischer theorem, we obtain the following statement: Let us consider the eigenvalues of $M_G$ in decreasing order. Then the first ${{n-2}\choose 2}-m-mn$ eigenvalues are all at least $n$, the next $mn$ eigenvalues are all at least $n-2\chi$. Finally, we also have at least $n-2$ eigenvalues equal to $1$. Assuming that $n>2\chi+1$, these are at least ${{n-1}\choose2}-m$ eigenvalues. Thus, we found all the eigenvalues of $M_G$. Thus,
\[\det M_G\ge n^{{{n-2}\choose 2}-m-mn} (n-2\chi)^{mn}. \]
Combining this Lemma~\ref{Lemma9}, we obtain that
\begin{align*}\mathbb{P}(\mathcal{T}_n\text{ is $(G,\varphi)$-nice})&\ge \frac{2^{2h} n^{{{n-2}\choose 2}-m-mn} (n-2\chi)^{mn}}{n^{{n-2}\choose {2}}}\\&=2^{2h} n^{-m}\left(1-\frac{2\chi}{n}\right)^{nm}\\&\ge 2^{2h} n^{-m} e^{-8Dm}
\end{align*}
for all large enough $n$.
\end{proof}

The number of injective maps $\varphi:V(G)\to [n]$ is
\[\prod_{i=0}^{m-1}(n-i)\ge 2^{-2h}n^m\]
for all large enough $n$. Combining this with Lemma~\ref{phinicebound}, we obtain the estimate in \eqref{firstmomentbound}.
\subsection{Estimating the second moment}\label{secsecondmoment}

\begin{lemma}\label{VFF}
 Let $G$ be a NUG, and let $\mathcal{C}_G$ be the associated complex. Let $F\subset F_2(\mathcal{C}_G)$, and let $V_F=\cup_{\sigma\in F} \sigma$. Then $|V_F|\ge |F|$.
\end{lemma}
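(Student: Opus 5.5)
The plan is to show that there is an injective map $\rho:F_2(\mathcal{C}_G)\to V(G)$ with $\rho(\sigma)\in\sigma$ for every triangular face $\sigma$. Once we have it, for any $F\subset F_2(\mathcal{C}_G)$ the set $\rho(F)$ is contained in $V_F=\cup_{\sigma\in F}\sigma$ and has exactly $|F|$ elements, so $|V_F|\ge|F|$. In other words, we exhibit a system of distinct representatives for all triangles at once, which implies Hall's condition for every subfamily $F$.

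First suppose the unique cycle of $G$ has length at least $5$. By construction, the faces of $\mathcal{C}_G$ are exactly the triangles $\sigma_v=\{v,p(v),p(p(v))\}$, one for each vertex $v$ of $G$. We set $\rho(\sigma_v)=v$. Then $\rho(\sigma_v)\in\sigma_v$, and $\rho$ is injective. Here we use that the triangles $\sigma_v$ are indexed by $V(G)$ and $|F_2(\mathcal{C}_G)|=m$; this was noted before Lemma~\ref{lemma8}, and it also follows because $J_G$ is square and nonsingular by Lemma~\ref{detJG2}, so the columns are distinct.

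Next suppose the unique cycle is the triangle $v_1v_2v_3$. For $v\notin V_0$ we again set $\rho(\{v,p(v),p(p(v))\})=v$. For the six triangles in \eqref{sixtriangles} we use the cyclic assignment
\[\{u_1,v_1,v_2\}\mapsto u_1,\ \{v_1,v_2,u_2\}\mapsto v_1,\ \{u_2,v_2,v_3\}\mapsto u_2,\ \{v_2,v_3,u_3\}\mapsto v_2,\ \{u_3,v_3,v_1\}\mapsto u_3,\ \{v_3,v_1,u_1\}\mapsto v_3.\]
Each image lies in its triangle, and the images exhaust $V_0$. The map is injective provided the six vertices of $V_0$ are distinct. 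This holds because $u_i\notin\{v_1,v_2,v_3\}$ by choice, and $u_i=u_j$ for $i\neq j$ would create a second cycle $v_iu_iv_j$ in $G$, contradicting unicyclicity. The images of the triangles indexed by vertices outside $V_0$ lie outside $V_0$ and are distinct from each other, so $\rho$ is injective overall.

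The only point needing care is this triangle case. There one must find a matching of the six special triangles onto $V_0$, and verify that $|V_0|=6$ using that the cycle is unique. After that, the inequality is immediate from $\rho(F)\subseteq V_F$.
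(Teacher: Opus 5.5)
Your proposal is correct and follows essentially the same route as the paper, which also builds an injective map sending each face $\sigma$ to a vertex $\alpha(\sigma)\in\sigma$ ($\{v,p(v),p(p(v))\}\mapsto v$, plus a separate assignment of the six triangles in \eqref{sixtriangles} onto $V_0$). Your assignment on those six triangles ($\{v_{i-1},v_i,u_i\}\mapsto v_{i-1}$ instead of the paper's $v_i$) is an equally valid variant, and your explicit check that $|V_0|=6$, via uniqueness of the cycle, spells out a detail the paper leaves as straightforward.
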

\begin{proof}

First, assume that $G$ the unique cycle of $G$ has length at least $5$. In that case, we can prove the statement by defining an injective map $\alpha:F\to V_F$ as follows. Given $\sigma\in F$, $\sigma$ is of the form $\{u,p(u),p(p(u))\}$, we set $\alpha(\sigma)=u$.

Next, consider the case when the unique cycle of $G$ is a triangle. Let $v_1,v_2,v_3,u_1,u_2,u_3$ be the six vertices that we used in the definition $\mathcal{C}_G$. Let $F_1$ be the set of the $6$ triangles listed in~\eqref{sixtriangles}. We again define an injective map $\alpha:F\to V_F$, but the definition is slightly more complicated in this case. First, we define $\alpha(\sigma)$ for $\sigma\in F\cap F_1$ as follows:
\[
\alpha(\sigma)=\begin{cases}
 u_i&\text{ if }\sigma=\{u_i,v_i,v_{i+1}\},\\
 v_i&\text{ if }\sigma=\{v_{i-1},v_i,u_i\}.
\end{cases}
\]
Note that indices are modulo $3$ in the definition above. If $\sigma\in F\setminus F_1$, then $\sigma$ is of the form $\{u,p(u),p(p(u))\}$ for some $u$ distinct from $v_1,v_2,v_3,u_1,u_2,u_3$. We set $\alpha(\sigma)=u$. It is straightforward to check that $\alpha$ is again injective.
\end{proof}

Recall that we have set $m=|V(G)|$. For $0\le k\le m$, we define
\begin{multline*}\Phi_{G,k,n}=\{(\varphi_1,\varphi_2)\,:\, \text{$\varphi_1$ and $\varphi_2$ are injective maps from $V(G)$ to $[n]$,} \\\text{ and }|F_2(\varphi_1(\mathcal{C}_{G}))\cap F_2(\varphi_2(\mathcal{C}_{G}))|=k \}.\end{multline*}

Given an injective map $\varphi_1:V(G_1)\to [n]$, we would like to estimate the number of maps $\varphi_2$ such that $(\varphi_1,\varphi_2)\in \Phi_{G,k,n}$. If $|F_2(\varphi_1(\mathcal{C}_{G}))\cap F_2(\varphi_2(\mathcal{C}_{G}))|=k$, then $|\varphi_1(V(G))\cap \varphi_2(V(G))|\ge k$ by Lemma~\ref{VFF}. Thus, we have at least $k$ vertices $v\in V(G)$ such that $\varphi_2(v)\in \varphi_1(V(G))$. Thus, we have at most ${{m}\choose {k}} m^k n^{m-k}$ choices for $\varphi_2$ given $\varphi_1$. The number of choices for $\varphi_1$ is at most~$n^m$. Therefore,
\[|\Phi_{G,k,n}|\le {{m}\choose {k}} m^k n^{2m-k}.\]

Observe that
\begin{align}
\mathbb{E}X_n^2&=\sum_{k=0}^m\sum_{(\varphi_1,\varphi_2)\in \Phi_{G,k,n}} \mathbb{P}(\mathcal{T}_n\text{ is $(G,\varphi_1)$-nice and $(G,\varphi_2)$-nice})\nonumber\\
&\le \sum_{k=0}^m\sum_{(\varphi_1,\varphi_2)\in \Phi_{G,k,n}} \mathbb{P}(F_2(\varphi_1(\mathcal{C}_G))\cup F_2(\varphi_2(\mathcal{C}_G)) \subset\mathcal{T}_n).\label{tobec}
\end{align}
Note that $(\varphi_1,\varphi_2)\in \Phi_{G,k,n}$, we have $|F_2(\varphi_1(\mathcal{C}_G))\cup F_2(\varphi_2(\mathcal{C}_G))|=2m-k$. Combining this with~\eqref{hadamard}, we see that for all $(\varphi_1,\varphi_2)\in \Phi_{G,k,n}$, we have
\[\mathbb{P}(F_2(\varphi_1(\mathcal{C}_G))\cup F_2(\varphi_2(\mathcal{C}_G)) \subset\mathcal{T}_n)\le \left(\frac{3}n\right)^{2m-k}.\]

Therefore, continuing \eqref{tobec},

\[
\mathbb{E}X_n^2\le \sum_{k=0}^m |\Phi_{G,k,n}| \left(\frac{3}n\right)^{2m-k}\le \sum_{k=0}^m {{m}\choose {k}} m^k 3^{2m-k}=(3(m+3))^m. 
\]
obtaining the estimate in \eqref{secondmomentbound} on the second moment of $X_n$.

\bibliography{references}

\bibliographystyle{plain}

\bigskip

\noindent Andr\'as M\'esz\'aros, \\
HUN-REN Alfr\'ed R\'enyi Institute of Mathematics, \\Budapest, Hungary,\\ {\tt meszaros@renyi.hu}

\end{document}